\newtheorem{thm}{Theorem}[section]
\newtheorem{cor}[thm]{Corollary}
\newtheorem{lem}[thm]{Lemma}
\newtheorem{prop}[thm]{Proposition}
\theoremstyle{nonumberplain}
\newtheorem{proof}{Proof}
\numberwithin{equation}{section}
\newcommand{\norm}[1]{\left\Vert#1\right\Vert}
\newcommand{\abs}[1]{\left\vert#1\right\vert}
\newcommand{\set}[1]{\left\{#1\right\}}
\newcommand{\R}{\mathbb{R}}
\newcommand{\N}{\mathbb{N}}
\newcommand{\SE}[2]{{\mathcal E}\left(#1\right)_#2}
\newcommand{\E}[1]{{E}\left[#1\right]}
\newcommand{\EF}[2]{\E{#1 \mid \F_{#2}}}
\newcommand{\Mss}[1]{\mathcal{E}^g_{#1}(\xi)}
\newcommand{\Mssa}[1]{\mathcal{E}^{g,a}_{#1}(\xi)}
\newcommand{\Mssp}[1]{\mathcal{E}^{g,+}_{#1}(\xi)}
\newcommand{\Acs}{\mathcal{A}(\xi,g)}
\newcommand{\Acso}{\mathcal{A}^1(\xi,g)}
\newcommand{\Aacs}{\mathcal{A}^a(\xi,g)}
\newcommand{\Acsmo}{\mathcal{A}^{M,1}(\xi,g)}
\newcommand{\F}{\mathcal{F}}
\renewcommand{\mid}{\;|\;}
\DeclareMathOperator*{\esssup}{ess\,sup}
\DeclareMathOperator*{\essinf}{ess\,inf}
\newcommand{\ba}{\begin{align}}
\newcommand{\ea}{\end{align}}
\newcommand{\bas}{\begin{align*}}
\newcommand{\Als}{\end{align*}}
\newcommand{\be}{\begin{equation}}
\newcommand{\ee}{\end{equation}}
\newcommand{\bes}{\begin{equation*}}
\newcommand{\ees}{\end{equation*}}
\newcommand{\bea}{\begin{eqnarray}}
\newcommand{\eea}{\end{eqnarray}}
\newcommand{\beas}{\begin{eqnarray*}}
\newcommand{\eeas}{\end{eqnarray*}}
\newcommand{\bem}{\begin{multline}}
\newcommand{\bems}{\begin{multline*}}
\newcommand{\q}{\quad}
\newcommand{\e}{\enspace}
\newcommand{\qq}{\qquad}
\newcommand{\ang}[1]{\left<#1\right>}  
\newcommand{\brak}[1]{\left(#1\right)}    
\newcommand{\crl}[1]{\left\{#1\right\}}   
\newcommand{\edg}[1]{\left[#1\right]}     
\newcommand{\ve}{\varepsilon}
\newcommand{\Q}{\mathbb{Q}}
\renewcommand{\mid}{\,|\,}
\newcommand*{\cadlag}{c\`adl\`ag}
\begin{document}
\title{Minimal Supersolutions of BSDEs with Lower Semicontinuous Generators\thanks{We 
thank Samuel Drapeau, Martin Karliczek and Anthony R\'eveillac for helpful comments and fruitful discussions.}}

\author{{\large Gregor Heyne}\thanks{heyne@math.hu-berlin.de; Humboldt-Universit\"at 
zu Berlin, Unter den Linden 6, 10099 Berlin, Germany.~Financial 
support from MATHEON project E.2 is gratefully acknowledged.}
\e\, {\large Michael Kupper}\thanks{ kupper@math.hu-berlin.de; Humboldt-Universit\"at 
zu Berlin, Unter den Linden 6, 10099 Berlin, Germany.~Financial 
support from MATHEON project E.11 is gratefully acknowledged.} 
\e\, {\large Christoph Mainberger}\thanks{mainberg@math.hu-berlin.de; Humboldt-Universit\"at 
zu Berlin, Unter den Linden 6, 10099 Berlin, Germany.~Financial 
support from the Deutsche Forschungsgemeinschaft via SFB 649 "Economic Risk"
is gratefully acknowledged.}}
\date{\today}
\maketitle

\begin{abstract}
We study the existence and uniqueness of minimal supersolutions of backward stochastic
differential equations with generators that are jointly lower
semicontinuous, bounded below by an affine function of the control variable
and satisfy a specific normalization property. 

{\bf Keywords: }Supersolutions of Backward Stochastic Differential Equations; Semimartingale Convergence; Nonlinear 
Expectations
\end{abstract}

\section{Introduction}
\addcontentsline{toc}{section}{Introduction}
\markboth{\uppercase{Introduction}}{\uppercase{Introduction}}
On a filtered probability space, the filtration of which is generated by a
$d$-dimensional Brownian motion,
we want to give conditions ensuring that the set $\Acs$, consisting of 
all \emph{supersolutions} $(Y,Z)$ of 
a backward stochastic differential equation
with \emph{terminal condition} $\xi$ and \emph{generator} $g$,
has a minimal element.
Recall that such a supersolution is a pair $(Y,Z)$ such that, for $0\le s\le t\le T$,
\[Y_s - \int_s^t g_u(Y_u,Z_u)du + \int_s^t Z_u dW_u \ge Y_t \q\text{and}\q Y_T \ge \xi\] is satisfied.%
~We call $Y$  the \emph{value process} and $Z$ the \emph{control process} 
of the supersolution $(Y,Z)$.%
~We start by considering the process $\Mss{}$ defined as
\[
\Mss{t} = \essinf \crl{Y_t \in L^0(\F_t) \,:\, (Y,Z) \in \Acs}\,,\q t\in[0,T]\,,
\]
and show that, under suitable conditions on the generator and the terminal condition, $\Mss{}$ 
is already the value process of the unique minimal supersolution, 
that is, there is a unique control process $\hat Z$ such that $(\Mss{},\hat Z)\in\Acs$.
It was recently shown in \citet{CSTP}
that, if the generator $g$ is jointly lower semicontinuous in $y$ and $z$, convex in $z$, monotone in $y$, 
and bounded below by an affine function of $z$, a unique minimal supersolution exists.%
~Their proof was based on convex combinations of a monotone decreasing sequence of \cadlag\,supermartingales converging
pointwise to $\Mss{}$ on the rationals and made use of compactness results for sequences of martingales given
in \citet{DelbSchach01}.%
~We follow a different approach to show the existence of a unique minimal supersolution.%
~Starting with the assumption that $g$ is also jointly lower semicontinuous in $y$ and $z$
and \emph{positive} and in addition satisfies a certain \emph{normalization} condition,
we find a sequence of supersolutions converging \emph{uniformly} to the \cadlag\, supermartingale $\Mssp{}$,
the right limit of $\Mss{}$.
We then use results on convergence of semimartingales given in \citet{ProtterBarlow}
to identify a unique process $\hat Z$ such that $(\Mssp{},\hat Z)\in\Acs$.
By showing that $\Mssp{}$ always stays below $\Mss{}$, we deduce $\Mssp{}=\Mss{}$
and thus $(\Mss{},\hat Z)$ is the unique minimal supersolution.
Later on, we relax the positivity assumption to that of 
boundedness below by an affine function of $z$.
Also the normalization condition will be relaxed.
Hence both, \citet{CSTP} and our work, show the existence of a unique 
minimal supersolution of BSDEs, but under mutually singular conditions on the generator.\\

Let us briefly discuss the existing literature on related problems, 
a broader discussion of which can be found in \citet{CSTP}.
Nonlinear BSDEs were first introduced
in \citet{peng01}.
They gave existence and uniqueness results for the case of Lipschitz generators and
square integrable terminal conditions.
\citet{kobylanski01} studies BSDEs with quadratic generators,
whereas \citet{DelbaenHu} consider superquadratic BSDEs with positive
generators that are convex in $z$ and independent of $y$.%
~Among the first introducing supersolutions of BSDEs were \citet[Section 2.3]{karoui01}.
Further references can also be found in \citet{PengMono}, who studies the existence and uniqueness of
minimal supersolutions under the assumption of a Lipschitz generator and square integrable terminal 
conditions.
Most recently, \citet{CheridStad} have analyzed existence and stability of 
supersolutions of BSDEs.
They consider terminal conditions which are functionals of the underlying Brownian motion and generators
that are convex in $z$ and Lipschitz in $y$ and they work with discrete time approximations of BSDEs.
Furthermore, the concept of supersolutions is closely related to Peng's $g$ and $G$-expectations, see
for instance \cite{peng03,peng04,peng05}, since the mapping $\xi\mapsto\Mss{0}$ can be seen as a nonlinear
expectation.%
~Another related field are stochastic target problems as introduced in \citet{sonertouzi01}, the solutions
of which are obtained by dynamic programming methods and can be characterized as viscosity solutions
of second order PDEs.\\

The remainder of this paper is organized as follows. 
Setting and notations are specified in Section 2. 
A precise definition of minimal supersolutions and important structural properties
of $\Mss{}$, along with the main existence theorem,
can then be found in Sections \ref{sec21} and \ref{sec22}.
Finally, possible relaxations on the assumptions imposed on the generator, as well as a generalization
to the case of arbitrary continuous local martingales, are 
discussed in Section \ref{sec23}.
 
\section{Setting and Notations}\label{sec01}

We consider a filtered probability space
$(\Omega,\F, (\F_t)_{t\ge 0},P)$,
where the filtration $(\F_t)$ is generated by a
$d$-dimensional Brownian motion $W$ and is assumed to satisfy the usual conditions.%
~For some fixed time horizon $T>0$ and for all $t\in[0,T]$,
the sets of $\F_t$-measurable random
variables are denoted by $L^0(\F_t)$, where random variables
are identified in the $P$-almost sure sense.
Let furthermore denote $L^p(\F_t)$ the set of random variables in $L^0(\F_t)$ 
with finite $p$-norm,
for $p \in [ 1,+\infty]$.
Inequalities and strict inequalities between any two
random variables or processes $X^1,X^2$ are understood in the $P$-almost sure or
in the $P\otimes dt$-almost sure sense, respectively.
In particular, two \emph{\cadlag\,} processes $X^1,X^2$ satisfying $X^1=X^2$ 
are indistinguishable, compare \citep[Chapter III]{kallenberg2002}.
We denote by $\mathcal{T}$ the set of stopping times with values in $[0,T]$ and
hereby call an increasing sequence of stopping times $(\tau^n)$, such that
$P[\bigcup_{n}\set{\tau^n=T}]=1$, a \emph{localizing sequence of stopping
times}.
By $\mathcal{S}:=\mathcal{S}(\R)$ we denote the set of 
\cadlag\, progressively measurable processes $Y$ with values in $\R$.
For $p \in \left[ 1,+\infty \right[$, we further denote by
$\mathcal{H}^p$ the set of \cadlag\, local martingales $M$ with finite $\mathcal H^p$-norm on $[0,T]$, that is 
$\norm{M}_{\mathcal H^p} := E[\langle M,M \rangle_T^{p/2}]^{1/p} <\infty$.
By $\mathcal{L}^p:=\mathcal{L}^p\left( W \right)$ we denote the set of $\R^{1\times d}$-valued, progressively
measurable processes $Z$, such that $\int ZdW \in \mathcal H^p$, that is,
$\norm{Z}_{\mathcal{L}^p}:=E[(\int_{0}^{T}Z_s^2 ds)^{p/2}]^{1/p}$ is finite.
For $Z \in \mathcal{L}^p$, the stochastic integral $(\int_0^t Z_s dW_s)_{t
\in [0,T]}$ is well defined, see \citep{protter}, and is by means
of the Burkholder-Davis-Gundy inequality \citep[Theorem 48]{protter} 
a continuous martingale.
We further denote by $\mathcal{L}:=\mathcal{L}\left( W \right)$ the set of
$\R^{1\times d}$-valued, progressively measurable processes $Z$, such that
there exists a localizing sequence of stopping times $(\tau^n)$ with
$Z1_{\left[0,\tau^n  \right]} \in \mathcal{L}^1$, for all $n\in\N$.
For $Z\in \mathcal L$, the stochastic integral $\int_{}^{}ZdW$ is well defined and is a
continuous local martingale.
Furthermore, for a process $X$, let $X^*$ denote the following expression $X^*:=\sup_{t\in[0,T]}\abs{X_t}$,
by which we define the norm $\norm{X}_{\mathcal R^\infty} := \norm{X^*}_{L^\infty}$.\\

We call a \cadlag\, semimartingale $X$ a \emph{special semimartingale}, if it can
be decomposed into $X = X_0 + M + A$, where $M$ is a local martingale and $A$ a
predictable process of finite variation such that $M_0=A_0=0$.
Such a decomposition is then unique, compare for instance \citep[Chapter III, Theorem 30]{protter},
and is called the \emph{canonical} decomposition of $X$.\\

We will use \emph{normal integrands}, a concept introduced in \cite{rockaNORM}, 
as generators of BSDEs.
Throughout this paper, a normal integrand is a function $g:\Omega \times \left[ 0,T
\right]\times \R\times \R^{1\times d} \to \left] -\infty,+\infty \right]$, 
such that
\begin{itemize}
 \item  $\brak{y,z} \mapsto g\brak{\omega,t,y,z}$ is jointly
lower semicontinuous, for all $(\omega,t) \in \Omega \times \left[ 0,T \right]$\,;
 \item $\brak{\omega,t} \mapsto g\brak{\omega,t,y,z}$ is progressively measurable, for all $(y,z) \in
\R \times \R^{1\times d}$\,.
\end{itemize}
For a normal integrand $g$ and progressively measurable processes $Y,Z$, the process $g( Y,Z)$ 
is itself progressively measurable and the integral $\int g(Y,Z)ds$ 
is well defined, $P$-almost surely, under the assumption
that $+\infty-\infty=+\infty$, see \citep[Chapter 14.F]{rockafellar02}.
Finally, as long as $g\ge 0$, the lower semicontinuity yields an extended Fatou's lemma, that is,
\bes
    \int_{}^{}\liminf_{n}g_s\left(Y_s^n,Z^n_s  \right)ds\leq
\liminf_{n}\int_{}^{}g_{s}\left( Y^n_s,Z^n_s \right)ds\,,
\ees
for any sequence $\brak{(Y^n,Z^n)}$ of progressively measurable processes.

\section{Minimal Supersolutions of BSDEs}\label{sec02}
\subsection{First Definitions and Structural Properties}\label{sec21}
A pair $(Y,Z)\in\mathcal S\times \mathcal L$ is called a \emph{supersolution} of a 
BSDE, if,
for $0\le s\le t\le T$, it satisfies
\be\label{eq03}
Y_s - \int_s^t g_u(Y_u,Z_u)du + \int_s^t Z_u dW_u \ge Y_t \q\text{and}\q Y_T \ge \xi\,\,,
\ee
for a normal integrand $g$ as \emph{generator} and a \emph{terminal condition} $\xi\in L^0(\F_T)$.
For a supersolution $(Y,Z)$, we call $Y$ the \emph{value process} and $Z$ its corresponding \emph{control process}.
A control process $Z$ is said to be \emph{admissible}, if the continuous local martingale $\int Z dW$ is a 
supermartingale.\\
Throughout this paper we say that a generator $g$ is 
\begin{enumerate}[label=\textsc{(pos)},leftmargin=40pt]
 \item positive, if  $g(y,z)\ge 0$, for all $(y,z) \in \R\times\R^{1\times d}$.
\end{enumerate}
\begin{enumerate}[label=\textsc{(nor)},leftmargin=40pt]
 \item normalized, if  $g_t(y,0)=0$, for all $(t,y)\in[0,T]\times\R$.
\end{enumerate}
We are now interested in the set
\be\label{eq02}
\Acs := \crl{(Y,Z) \in\mathcal S\times \mathcal L\,:\, Z \,\,\text{is admissible and \eqref{eq03} holds}}
\ee
and the process 
\be\label{eq32}
\Mss{t} = \essinf \crl{Y_t \in L^0(\F_t) \,:\, (Y,Z) \in \Acs}\,,\q t\in[0,T]\,.
\ee
For the proof of our main existence theorem we will need some auxiliary results concerning 
structural properties of $\Mss{}$ and supersolutions $(Y,Z)$ in $\Acs$.
\begin{lem}\label{lemma1}
Let $g$ be a generator satisfying \textsc{(pos)}.
Assume further that $\Acs\neq\emptyset$ and $\xi^-\in L^1(\F_T)$. 
Then $\xi\in L^1(\F_T)$ and, 
for any $(Y,Z)\in\Acs$, the control $Z$ is unique and the value process $Y$ is a supermartingale 
such that $Y_t\ge \EF{\xi}{t}$.%
~Moreover, the unique canonical decomposition of $Y$ is given by
\be\label{eq21}
Y = Y_0 + M - A\,,
\ee
where $M= \int ZdW$ and $A$ is an increasing, predictable, \cadlag\, process with  $A_0=0$.
\end{lem}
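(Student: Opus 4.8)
The plan is to use positivity of $g$ to uncover the finite-variation structure of $Y$ first, and only then read off integrability, the supermartingale property, and predictability. Write $M := \int Z\,dW$, which is a continuous local martingale and, by admissibility, a supermartingale; in particular $M_0 = 0$ and $M_t \in L^1(\F_t)$ for every $t$. Rearranging \eqref{eq03} as $(Y_s - M_s) - (Y_t - M_t) \ge \int_s^t g_u(Y_u,Z_u)\,du \ge 0$, where the last inequality is \textsc{(pos)}, I obtain $Y_s - M_s \ge Y_t - M_t$ first for all rational $s \le t$ off a single null set and then, by right-continuity of the \cadlag\ process $Y - M$, for all $s \le t$. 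Thus $Y - M$ is nonincreasing, and $A := Y_0 - (Y - M)$ is an increasing, \cadlag, adapted process with $A_0 = 0$ satisfying $Y = Y_0 + M - A$. Uniqueness of the control follows at once: if $(Y,Z),(Y,Z') \in \Acs$, then $\int (Z - Z')\,dW$ equals the difference of the two nonincreasing processes $Y - \int Z'\,dW$ and $Y - \int Z\,dW$, hence has finite variation, yet is also a continuous local martingale, hence is constant and equal to $0$; therefore $\int_0^T \abs{Z_u - Z'_u}^2\,du = 0$.

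For integrability, take $s = 0$, $t = T$ in \eqref{eq03}: since $g \ge 0$ and $Y_T \ge \xi$, this gives $\xi \le Y_0 + M_T$, so that $\xi^+ \le \abs{Y_0} + \abs{M_T} \in L^1$ (here $Y_0$ is a.s.\ constant, as $\F_0$ is $P$-trivial), and with the hypothesis $\xi^- \in L^1(\F_T)$ we conclude $\xi \in L^1(\F_T)$. Fixing $t$ and taking terminal time $T$ in \eqref{eq03} yields $Y_t \ge \xi - (M_T - M_t)$, hence $Y_t^- \le \xi^- + \abs{M_T} + \abs{M_t} \in L^1$; the monotonicity of $Y - M$ gives $Y_t \le Y_0 + M_t$, hence $Y_t^+ \le \abs{Y_0} + \abs{M_t} \in L^1$. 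So $Y_t \in L^1(\F_t)$ for all $t$. Now $0 \le \int_s^t g_u(Y_u,Z_u)\,du \le (Y_s - M_s) - (Y_t - M_t) \in L^1$, so conditioning \eqref{eq03} on $\F_s$ is legitimate; using $\EF{M_t - M_s}{s} \le 0$ and $g \ge 0$ gives $Y_s \ge \EF{Y_t}{s}$, i.e.\ $Y$ is a supermartingale. The same conditioning between $t$ and $T$, with $Y_T \ge \xi$, produces $Y_t \ge \EF{\xi}{t}$.

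It remains to show the increasing process $A$ is predictable, so that $Y = Y_0 + M - A$ is the canonical decomposition. Since $Y - \EF{\xi}{\cdot} \ge 0$ is a nonnegative \cadlag\ supermartingale, the Doob--Meyer decomposition applies to $Y$ and provides a canonical decomposition $Y = Y_0 + N - B$ with $N$ a local martingale, $N_0 = 0$, and $B$ predictable, increasing, $B_0 = 0$. Because the filtration is generated by $W$, the martingale representation theorem forces $N$ to be continuous. Then $M - N = A - B$ is a continuous local martingale which, being a difference of increasing processes, has finite variation; such a process is constant and equal to its initial value $0$. Hence $M = N$ and $A = B$, so $A$ is predictable and $Y = Y_0 + M - A$ is the unique canonical decomposition.

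I expect the last step to be the main obstacle. Steps one through three need only the positivity of $g$ together with elementary rearrangements and conditioning, but the construction in the first step delivers merely an \emph{adapted} increasing process $A$; upgrading it to the \emph{predictable} compensator is where the real work lies. This is precisely where the special structure of the Brownian filtration enters---through the martingale representation theorem, which makes every local martingale continuous and thereby lets the principle ``a continuous local martingale of finite variation is constant'' annihilate the martingale part of $A - B$.
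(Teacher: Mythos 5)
Your proof is correct and follows essentially the same route as the argument the paper defers to (\citep[Lemma~3.4]{CSTP}): positivity of $g$ and admissibility make $Y-\int Z\,dW$ nonincreasing, which yields integrability, the supermartingale property and $Y_t\ge\EF{\xi}{t}$ by conditioning, and the Doob--Meyer decomposition combined with the continuity of local martingales in a Brownian filtration identifies $M=\int Z\,dW$, the predictability of $A$, and the uniqueness of $Z$. No gaps.
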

The proof of Lemma \ref{lemma1} can be found in \citep[Lemma 3.4]{CSTP}.
\begin{prop}\label{prop2}
Suppose that $\Acs\neq\emptyset$ and let $\xi\in L^0(\F_T)$ be a terminal condition such that $\xi^-\in L^1(\F_T)$.
If $g$ satisfies \textsc{(pos)}, then the process $\Mss{}$ is a supermartingale and
\be\label{eq34}
\Mss{t}\ge\Mssp{t}:= \lim_{\underset{s\in\Q}{s\downarrow t}} \Mss{s}\,,\q\text{for all $t\in[0,T)$}\,,
\ee
and $\Mssp{T}:=\xi$.
In particular, $\Mssp{}$ is a \cadlag\, supermartingale.
Furthermore, the following two pasting properties 
hold true.
\begin{enumerate}
 \item Let $(Z^n)\subset \mathcal L$ be admissible, $\sigma\in\mathcal T$, and 
$(B_n)\subset\F_\sigma$ a partition of $\Omega$.
Then the pasted process $\bar Z = Z^11_{[0,\sigma]} + \sum_{n\ge 1} Z^n1_{B_n}1_{]\sigma,T]}$ is admissible.
 \item Let $\brak{(Y^n,Z^n)}\subset\Acs$, $\sigma\in\mathcal T$ and $(B_n)\subset\F_\sigma$ as before. 
If $Y^1_{\sigma-}1_{B_n}\ge Y^n_\sigma1_{B_n}$ 
holds true for all $n\in\N$,
then $(\bar Y, \bar Z) \in \Acs$, where
\bas
\bar Y = Y^1 1_{[0,\sigma[} + \sum_{n\ge 1} Y^n1_{B_n}1_{[\sigma,T]}\q\text{and}\q
\bar Z = Z^1 1_{[0,\sigma]} + \sum_{n\ge 1} Z^n1_{B_n}1_{]\sigma,T]}\,.
\end{align*}
\end{enumerate}
\end{prop}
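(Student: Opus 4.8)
The plan is to establish the two pasting properties first, because both the supermartingale property of $\Mss{}$ and the right-limit inequality \eqref{eq34} rest on them. For the admissibility pasting in part~(i), I would write $\bar M=\int\bar Z\,dW$ and, using that each $B_n\in\F_\sigma$ and that $M^n:=\int Z^n\,dW$, obtain $\bar M_t = M^1_{\sigma\wedge t}+\sum_n 1_{B_n}\brak{M^n_t-M^n_{\sigma\wedge t}}$. Since $\bar M$ is a continuous local martingale it remains only to verify the supermartingale inequality $\EF{\bar M_t}{s}\le\bar M_s$ for $s\le t$. I would split $\Omega$ into the $\F_s$-measurable events $\set{\sigma\le s}$ and $\set{\sigma>s}$: on the former the $\F_\sigma$-measurable indicators $1_{B_n}$ and the values $M^n_\sigma$ pull out of $\EF{\cdot}{s}$ and the supermartingale property of each $M^n$ finishes the estimate; on the latter the value of $\bar M$ at $s$ is $M^1_s$, and introducing the auxiliary stopping time $\rho:=(\sigma\wedge t)\vee s\in[s,t]$ and conditioning through $\F_{\sigma\vee s}$ (to legitimately extract $1_{B_n}$) together with optional sampling gives $\EF{\bar M_t}{s}\le M^1_s=\bar M_s$. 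Hence $\bar Z$ is admissible.

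For the supersolution pasting in part~(ii), admissibility of $\bar Z$ is exactly~(i), so it remains to check $\bar Y\in\mathcal S$ and that $\brak{\bar Y,\bar Z}$ satisfies \eqref{eq03}. As a paste of \cadlag\, processes along $\sigma$ with $\F_\sigma$-measurable weights, $\bar Y$ is \cadlag\, and adapted, and $\bar Y_T=\sum_n Y^n_T1_{B_n}\ge\xi$. For \eqref{eq03} I would fix $0\le s\le t\le T$ and argue on each $B_n$ separately; the only nontrivial case is the straddling one $s<\sigma\le t$. Letting $t'\uparrow\sigma$ in the inequality for $(Y^1,Z^1)$, and using that the time integral and the continuous martingale $\int Z^1\,dW$ carry no mass at $\sigma$, yields the left-limit version $Y^1_s-\int_s^\sigma g_u(Y^1_u,Z^1_u)\,du+\int_s^\sigma Z^1_u\,dW_u\ge Y^1_{\sigma-}$. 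Chaining this with the inequality for $(Y^n,Z^n)$ on $[\sigma,t]$ and invoking the hypothesis $Y^1_{\sigma-}1_{B_n}\ge Y^n_\sigma 1_{B_n}$ closes the estimate on $B_n$; summing over the partition gives \eqref{eq03}, so $\brak{\bar Y,\bar Z}\in\Acs$.

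With the pasting properties available, the supermartingale property of $\Mss{}$ follows from downward directedness of the family $\set{Y_t:(Y,Z)\in\Acs}$. Given $(Y,Z)\in\Acs$ and a time $t$, I would paste an arbitrary $(Y',Z')\in\Acs$ onto $Y$ at the deterministic time $\sigma=t$ over $B=\set{Y'_t<Y_t}$: this is admissible by~(ii) since the required inequalities $Y_{t-}\ge Y_t$ on $B^c$ and $Y_{t-}\ge Y'_t$ on $B$ hold because, by Lemma~\ref{lemma1}, the jumps of the supermartingale $Y$ are nonpositive. This produces $(W,V)\in\Acs$ with $W_t=Y_t\wedge Y'_t$ and $W_u=Y_u$ for $u<t$. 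Choosing $(Y',Z')$ so that these minima decrease to $\Mss{t}$ and applying conditional monotone convergence (all terms dominate $\EF{\xi}{t}\in L^1$ by Lemma~\ref{lemma1}) gives $\EF{\Mss{t}}{s}\le W_s=Y_s$; taking the essential infimum over $(Y,Z)$ yields $\EF{\Mss{t}}{s}\le\Mss{s}$. Then \eqref{eq34} and the \cadlag\, supermartingale property of $\Mssp{}$ are the standard regularization results for supermartingales under the usual conditions: $\Mssp{}$ is the right-limit of $\Mss{}$ along the rationals, which exists, is \cadlag, is a supermartingale, and satisfies $\Mssp{t}\le\Mss{t}$; letting $s\downarrow t$ in $\Mss{s}\ge\EF{\xi}{s}$ identifies the terminal value $\Mssp{T}=\xi$.

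I expect the main obstacle to be the bookkeeping across the stopping time $\sigma$: in~(i) the legitimate extraction of the $\F_\sigma$-measurable indicators from a conditional expectation given $\F_s$ (handled via the auxiliary stopping time $\rho$ and optional sampling), and in~(ii) the passage to the left limit $Y^1_{\sigma-}$, which is exactly where the hypothesis $Y^1_{\sigma-}1_{B_n}\ge Y^n_\sigma 1_{B_n}$ enters and without which the chained inequality would fail.
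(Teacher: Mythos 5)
Your proof is correct and takes essentially the same route as the paper: the same tower/optional-sampling computation through $\F_{s\vee\sigma}$ for admissibility of the pasted control, the same passage to the left limit at $\sigma$ (the paper realizes your ``$t'\uparrow\sigma$'' via a foretelling sequence of stopping times) for the supersolution pasting, and the standard downward-directedness argument for the supermartingale property of $\Mss{}$, which the paper simply delegates to a citation of \citep[Proposition 3.5]{CSTP}. No gaps.
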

\begin{proof}
The proof of the part concerning the process $\Mssp{}$ can be found in 
\citep[Proposition 3.5]{CSTP}.
As to the first pasting property, let $M^n$ and $\bar M$ denote the stochastic integrals of the $Z^n$ and $\bar Z$, respectively.
First, it follows from $(Z^n)\subset \mathcal L$ and from $(B_n)$ being a partition that $\bar Z\in\mathcal L$
and that $\int_{s\vee\sigma}^{t\vee\sigma}\bar Z_udW_u=\sum1_{B_n}\int_{s\vee\sigma}^{t\vee\sigma}Z^n_udW_u$.
Now observe that the admissibility of all $Z^n$ yields
\begin{multline*}
\EF{\bar M_t-\bar M_s}{s}=\EF{M^1_{(t\wedge\sigma)\vee s}-M^1_s}{s} +
 E\Big[\sum_{n\ge1}1_{B_n}\EF{M^n_{t\vee\sigma}-M^n_{s\vee\sigma}}{s\vee\sigma}\,\big|\,\F_s\Big]\le0\,,
\end{multline*}
for $0\le s\le t\le T$.
Thus, $\bar Z$ is admissible.%
~Finally, we can approximate
$\sigma$ from below by some foretelling sequence of stopping times $(\eta_m)$\footnote{Such a sequence satisfying $\tilde \eta_m<\sigma$, for all $m\in\N$, 
always exists, since in a Brownian filtration every
stopping time is predictable, compare \citep[Corollary V.3.3]{revuz1999}.}, and then
show, analogously to \citep[Lemma 3.1]{CSTP}, that the pair $(\bar Y,\bar Z)$ satisfies inequality \eqref{eq03}
and is thus an element of $\Acs$. 
\end{proof}
\begin{prop}\label{lemma2}
Let $0=\tau_0\le\tau_1\le\tau_2\le\cdots$ be a sequence of stopping times converging
to the finite stopping time $\tau^\ast=\lim_{n\to\infty} \tau_n$. Further, let $(Y^n)$ be a sequence 
of \cadlag\, supermartingales such that  $Y^n_{\tau_n-}\ge Y^{n+1}_{\tau_n}$,
and which satisfies $Y^n1_{[\tau_{n-1},\tau_n[}\ge M1_{[\tau_{n-1},\tau_n[}$, where $M$ is a uniformly integrable martingale.
Then, for any sequence of stopping times
$\sigma_n\in[\tau_{n-1},\tau_n[$\,, the limit  $Y^\infty:=\lim_{n\to\infty} Y^n_{\sigma_n}$ exists and the process
\[\bar Y:=\sum_{n\ge 1} Y^n 1_{[\tau_{n-1},\tau_n[}+Y^\infty 1_{[\tau^\ast,\infty[} \]
is a \cadlag\, supermartingale.~Moreover, the limit $Y^\infty$ is independent 
of the approximating sequence $(Y^n_{\sigma_n})$ and, if all $Y^n$ are
continuous and $Y^n_{\tau_n}=Y^{n+1}_{\tau_n}$, for all $n\in\N$, then $\bar Y$ is continuous. 
\end{prop}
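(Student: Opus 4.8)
The plan is to handle the three assertions --- existence of the limit $Y^\infty$, the supermartingale property of $\bar Y$, and the continuity statement --- in turn, with the existence of $Y^\infty$ carrying most of the probabilistic weight. Throughout, the uniform lower bound by the martingale $M$ is what licenses the limit interchanges.

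First I would fix a sequence $\sigma_n\in[\tau_{n-1},\tau_n[$ and note that $\sigma_{n+1}\ge\tau_n>\sigma_n$, so $(\sigma_n)$ increases to $\tau^\ast$ from strictly below and $(\F_{\sigma_n})$ is a filtration. The key claim is that $(Y^n_{\sigma_n})_n$ is a discrete-time supermartingale along $(\F_{\sigma_n})$, which I would obtain by chaining
\[\EF{Y^{n+1}_{\sigma_{n+1}}}{\sigma_n}\le \EF{Y^{n+1}_{\tau_n}}{\sigma_n}\le \EF{Y^{n}_{\tau_n-}}{\sigma_n}\le Y^n_{\sigma_n}\,.\]
The first inequality is the tower property combined with optional sampling of the supermartingale $Y^{n+1}$ across $\tau_n\le\sigma_{n+1}$; the second is the pasting hypothesis $Y^n_{\tau_n-}\ge Y^{n+1}_{\tau_n}$; the third I would get by announcing $\tau_n$ from below with a foretelling sequence $\rho_m\uparrow\tau_n$, $\sigma_n\le\rho_m<\tau_n$ (such a sequence exists since every stopping time is predictable in a Brownian filtration), applying optional sampling to get $\EF{Y^n_{\rho_m}}{\sigma_n}\le Y^n_{\sigma_n}$, and letting $m\to\infty$. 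This last passage is exactly where the lower bound enters: since $\rho_m\in[\tau_{n-1},\tau_n[$ we have $Y^n_{\rho_m}-M_{\rho_m}\ge 0$, so conditional Fatou applies, and $M_{\rho_m}\to M_{\tau_n-}$ in $L^1$ by uniform integrability, which together recover $\EF{Y^n_{\tau_n-}}{\sigma_n}\le Y^n_{\sigma_n}$.

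With the discrete supermartingale in hand I would subtract $M$: the process $(Y^n_{\sigma_n}-M_{\sigma_n})_n$ is a nonnegative discrete supermartingale, hence converges a.s.\ by the supermartingale convergence theorem; since $\sigma_n\uparrow\tau^\ast$ with $\sigma_n<\tau^\ast$ gives $M_{\sigma_n}\to M_{\tau^\ast-}$ a.s., the limit $Y^\infty=\lim_n Y^n_{\sigma_n}$ exists and is a.s.\ finite. Independence of $(\sigma_n)$ then comes for free: given a second sequence $(\sigma_n')$, the interleaved sequence ($\sigma_n$ for odd $n$, $\sigma_n'$ for even $n$) again lies in $[\tau_{n-1},\tau_n[$, so its limit exists by the above; as $(Y^n_{\sigma_n})$ and $(Y^n_{\sigma_n'})$ are two convergent subsequences of that one sequence, all three limits coincide. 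For the supermartingale property of $\bar Y$ I would first record that it is \cadlag: on each $[\tau_{n-1},\tau_n[$ it equals the \cadlag\ process $Y^n$, at each junction $\tau_n$ it is right-continuous with a downward jump controlled by $Y^n_{\tau_n-}\ge Y^{n+1}_{\tau_n}$, and at $\tau^\ast$ its left limit equals $Y^\infty=\bar Y_{\tau^\ast}$ by the independence just proved. I would then verify $\EF{\bar Y_\rho}{\nu}\le\bar Y_\nu$ for all bounded stopping times $\nu\le\rho$, splitting $\Omega$ into $\{\rho<\tau^\ast\}$, $\{\nu\ge\tau^\ast\}$, and $\{\nu<\tau^\ast\le\rho\}$. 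On $\{\nu\ge\tau^\ast\}\in\F_\nu$ the process is frozen at the $\F_{\tau^\ast}$-measurable $Y^\infty$, so the inequality is an equality. A single-junction pasting lemma --- proved by the same three-case optional-sampling computation using $U_{\sigma-}\ge V_\sigma$ and the common lower bound $M$ --- shows each finite paste $\bar Y^N=\sum_{n\le N}Y^n1_{[\tau_{n-1},\tau_n[}+Y^N1_{[\tau_N,\infty[}$ is a supermartingale; on $\{\rho<\tau^\ast\}$ we have $\bar Y^N=\bar Y$ at $\nu,\rho$ for large $N$, and the crossing contribution on $\{\nu<\tau^\ast\le\rho\}$ follows by sending the stopping level to $\tau^\ast$ and invoking the a.s.\ and $L^1$ convergence $\bar Y_{\sigma_N}\to Y^\infty$ (again via the bound $M$). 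Integrability is supplied by $\bar Y\ge M$ from below and by the supermartingale estimates from above.

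Finally, under the extra hypotheses, each $Y^n$ continuous and $Y^n_{\tau_n}=Y^{n+1}_{\tau_n}$, the jump at $\tau_n$ vanishes since $\bar Y_{\tau_n-}=Y^n_{\tau_n}=Y^{n+1}_{\tau_n}=\bar Y_{\tau_n}$, while continuity at $\tau^\ast$ is already established, so $\bar Y$ is continuous. I expect the main obstacle to be the behaviour across the accumulation point $\tau^\ast$: both the existence of $Y^\infty$ and the supermartingale inequality for stopping times straddling $\tau^\ast$ hinge on interchanging limits with (conditional) expectations, and it is precisely the uniform lower bound by the uniformly integrable martingale $M$ that makes these interchanges legitimate, through conditional Fatou and the $L^1$-convergence of $M$ along $\sigma_n\uparrow\tau^\ast$. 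The within-interval supermartingale property and the single-junction pasting are, by contrast, routine.
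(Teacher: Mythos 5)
Your proposal follows essentially the same route as the paper: the chain of inequalities showing that $(Y^n_{\sigma_n})$ is a discrete $(\F_{\sigma_n})$-supermartingale (via foretelling sequences and conditional Fatou, licensed by the lower bound $M$), supermartingale convergence for the existence of $Y^\infty$, the interleaving argument for its independence of the sampling sequence, pasting of finitely many pieces plus a Fatou passage across $\tau^\ast$ for the supermartingale property, and the same treatment of continuity. Two small caveats: the claimed $L^1$-convergence of $\bar Y_{\sigma_N}$ to $Y^\infty$ is unjustified (a nonnegative supermartingale need not converge in $L^1$), but it is also not needed, since conditional Fatou --- which you also invoke --- is exactly what closes the crossing case $\{\nu<\tau^\ast\le\rho\}$. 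Moreover, the existence of the left limit of $\bar Y$ at $\tau^\ast$ does not follow from the independence of $Y^\infty$ over one-per-interval sampling sequences (the paths of $Y^n$ could oscillate with non-vanishing amplitude inside $[\tau_{n-1},\tau_n[$, and the relevant oscillation times need not be stopping times), so you should, as the paper does, first establish that $\bar Y$ is a supermartingale with right-continuous paths and only then deduce the existence of left limits from the standard regularity theorem for right-continuous supermartingales.
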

\begin{proof}
Note that $(Y^n_{\sigma_n})$ is a $({\cal F}_{\sigma_n})$-supermartingale.
Indeed, if $(\tilde \eta_m)\uparrow \tau_n$ is a foretelling sequence of stopping 
times, then, with $\eta_m:=\tilde \eta_m\vee \tau_{n-1}$,
the family $((Y^n_{\eta_m})^-)_{m\in\N}$ is uniformly integrable and we obtain
\bems E[Y^{n+1}_{\sigma_{n+1}}\mid\F_{\sigma_{n}}]=E[E[Y^{n+1}_{\sigma_{n+1}}\mid\F_{\tau_n}]\mid\F_{\sigma_{n}}]
\le E[Y^{n+1}_{\tau_n}\mid\F_{\sigma_{n}}]\le E[Y^{n}_{\tau_{n-}}\mid\F_{\sigma_{n}}]\\
\le\liminf_m E[Y^{n}_{\eta_m}\mid\F_{\sigma_{n}}]\le\liminf_m Y^{n}_{\eta_m\wedge\sigma_{n}}=Y^n_{\sigma_n}\,.
\end{multline*}
Moreover, $((Y^n_{\sigma_n})^-)$ is uniformly integrable.
Hence, the sequence $(Y^n_{\sigma_n})$ converges  by the supermartingale convergence
theorem, see \citep[Theorems V.28,29]{Dellacherie1982}, to some random variable $Y^\infty$, $P$-almost surely,
and thus $\bar Y$ is well-defined.%
~Furthermore, the limit $Y^\infty$ is independent 
of the approximating sequence $(Y^n_{\sigma_n})$.%
~Indeed, for any other sequence $(\tilde\sigma_n)$ with $\tilde\sigma_n\in[\tau_{n-1},\tau_n[$, the limit $\lim_{n} Y^n_{\tilde\sigma_n}$ exists
by the same argumentation.%
~Now $\lim_{n} Y^n_{\sigma_n}=\lim_{n} Y^n_{\tilde\sigma_n}= Y^\infty$
holds, 
since the sequence $(\hat\sigma_n)$ defined by
\[
\hat\sigma_n:=\left\{\begin{array}{l} \e\,\,\sigma_{\frac{n}{2}}\vee\tilde\sigma_{\frac{n}{2}}\e\e\e,\,\text{for $n$ even}\\
\sigma_{\frac{n+1}{2}}\wedge\tilde\sigma_{\frac{n+1}{2}}\e\,,\,\text{for $n$ odd}\end{array}\right.
\] 
satisfies $\hat\sigma_n\in[\tau_{n-1},\tau_n[$ 
and $\lim_{n} Y^n_{\hat\sigma_n}$ exists. 
Thus, all limits must coincide.
Next, we show that $\bar Y^{\sigma_n}$ is a supermartingale, for all $n\in\N$.
To this end first observe that, for all $0\le s\le t$,
\bems
\EF{\bar Y^{\sigma_n}_t - \bar Y^{\sigma_n}_s}{s} = 
\sum_{k=0}^{n-2}E\big[E\big[\bar Y^{\sigma_n}_{(\tau_{k+1}\vee s)\wedge t}-
\bar Y^{\sigma_n}_{(\tau_{k}\vee s)\wedge t}\mid\F_{(\tau_{k}\vee s)\wedge t}\big]\mid\F_{s}\big]\\ 
+ E\big[E\big[\bar Y^{\sigma_n}_{(\sigma_n\vee s)\wedge t}-
\bar Y^{\sigma_n}_{(\tau_{n-1}\vee s)\wedge t}\mid\F_{(\tau_{n-1}\vee s)\wedge t}\big]\mid\F_{s}\big]\\
+ E\big[E\big[\bar Y^{\sigma_n}_{t} - \bar Y^{\sigma_n}_{(\sigma_n\vee s)\wedge t}\mid\F_{(\sigma_n\vee s)\wedge t}\big]\mid\F_{s}\big] \,.
\end{multline*}
Note further that, for each $n\in\N$, the process $\bar Y^{\sigma_n}$ is \cadlag\, and can only jump downwards, that is, 
$\bar Y^{\sigma_n}_{t-}\ge\bar Y^{\sigma_n}_{t}$, for all $t\in\R$.
Observe to this end that, on the one hand, $\bar Y^{\sigma_n}_{\tau_k-}=Y^k_{\tau_k-}\ge Y^{k+1}_{\tau_k}=\bar Y^{\sigma_n}_{\tau_k}$,
for all $0\le k\le n-1$, by assumption, where we assumed $\tau_{k-1}<\tau_k$, without loss of generality.
On the other hand, $Y^k$ can only jump downwards. 
Indeed, as \cadlag\, 
supermartingales, all $Y^k$ can be decomposed into
$Y^k = Y^k_0 + M^k - A^k$,
by the Doob-Meyer decomposition theorem \citep[Chapter III, Theorem
13]{protter},
where $M^k$ is a local martingale and $A^k$ a predictable, increasing process
with $A^k_0=0$. 
Since in a Brownian filtration every local martingale is
continuous, the claim follows.

Thus, for all $0\le k\le n-2$, and $(\tilde \eta_m)\uparrow \tau_{k+1}$ a foretelling sequence of stopping times,
it holds with $\eta_m:=\tilde \eta_m\vee\tau_k$, 
\bems
 E\big[\bar Y^{\sigma_n}_{(\tau_{k+1}\vee s)\wedge t}-\bar Y^{\sigma_n}_{(\tau_{k}\vee s)\wedge t}\mid\F_{(\tau_{k}\vee s)\wedge t}\big]\le
E\big[\bar Y^{\sigma_n}_{((\tau_{k+1}-)\vee s)\wedge t}-\bar Y^{\sigma_n}_{(\tau_{k}\vee s)\wedge t}\mid\F_{(\tau_{k}\vee s)\wedge t}\big]\\
=E\big[\liminf_m\bar Y^{\sigma_n}_{(\eta_m\vee s)\wedge t}- \bar Y^{\sigma_n}_{(\tau_{k}\vee s)\wedge t}\mid\F_{(\tau_{k}\vee s)\wedge t}\big]\\
\qq\qq\qq\le E\big[\liminf_m Y^{k+1}_{(\eta_m\vee s)\wedge t}\mid\F_{(\tau_{k}\vee s)\wedge t}\big]- Y^{k+1}_{(\tau_{k}\vee s)\wedge t}\\
\le\liminf_m E\big[Y^{k+1}_{(\eta_m\vee s)\wedge t}\mid\F_{(\tau_{k}\vee s)\wedge t}\big]- Y^{k+1}_{(\tau_{k}\vee s)\wedge t}\le 0\,.
\end{multline*}
Moreover, $E[\,\bar Y^{\sigma_n}_{t} - \bar Y^{\sigma_n}_{(\sigma_n\vee s)\wedge t}\mid\F_{(\sigma_n\vee s)\wedge t}]=0$, 
as well as 
\bems
E\big[\bar Y^{\sigma_n}_{(\sigma_n\vee s)\wedge t}-\bar Y^{\sigma_n}_{(\tau_{n-1}\vee s)\wedge t}\mid\F_{(\tau_{n-1}\vee s)\wedge t}\big]
\le E\big[Y^{n}_{(\sigma_n\vee s)\wedge t}- Y^{n}_{(\tau_{n-1}\vee s)\wedge t}\mid\F_{(\tau_{n-1}\vee s)\wedge t}\big]\le0\,.\end{multline*}
Combining this we obtain that $\EF{\bar Y^{\sigma_n}_t}{s}\le\bar Y^{\sigma_n}_s$.
Furthermore, $\lim_{n} \bar Y^{\sigma_n}_t = \bar Y_t$, for all $t\in\R$.
Indeed, let us write $\lim_{n} \bar Y^{\sigma_n}_t= \lim_{n}\bar Y^{\sigma_n}_t1_{\{t<\tau^*\}}+\lim_{n} \bar Y^{\sigma_n}_t1_{\{t\ge\tau^*\}}$.
Then, $\lim_{n} \bar Y^{\sigma_n}_t1_{\{t\ge\tau^*\}}=\lim_{n} Y^n_{\sigma_n}1_{\{t\ge\tau^*\}}=Y^\infty1_{\{t\ge\tau^*\}}=\bar Y_t1_{\{t\ge\tau^*\}}$
and $\lim_n\bar Y_{\sigma_n\wedge t}1_{\{t<\tau^*\}}=\bar Y_t1_{\{t<\tau^*\}}$.
Hence, the claim follows.
As a consequence of Fatou's lemma it now holds that
\bes \EF{\bar Y_t}{s}
\le\liminf_{n\to\infty} \EF{\bar Y^{\sigma_n}_t}{s} 
\le \liminf_{n\to\infty}\bar Y^{\sigma_n}_s=\bar Y_s\,,\ees
since the family $((\bar Y^{\sigma_n}_t)^-)$ 
is uniformly integrable.
Hence, $\bar Y$ is a supermartingale, which 
by construction has right-continuous paths and \citep[Theorem 1.3.8]{Karatzas1991} then yields that $\bar Y$ is even \cadlag.
Finally, whenever all $Y^n$ are
continuous and $Y^n_{\tau_n}=Y^{n+1}_{\tau_n}$ holds, for all $n\in\N$,
the process $\bar Y$ is continuous per construction.
\end{proof}

\subsection{Existence and Uniqueness of Minimal Supersolutions}\label{sec22}

We are now ready to state our main existence result.
Possible relaxations of the assumptions \textsc{(pos)} and \textsc{(nor)} imposed on the generator
are discussed in Section \ref{sec23}.
Note that it is not our focus to investigate conditions assuring the crucial
assumption that $\Acs\neq\emptyset$.
See \citet{CSTP} and the references therein for further details.
\begin{thm}\label{thm31}
Let $g$ be a generator satisfying \textsc{(pos)} and \textsc{(nor)} and $\xi\in L^0(\F_T)$ be a 
terminal condition such that $\xi^-\in L^1(\F_T)$.
If $\Acs\neq\emptyset$, then there exists a unique $\hat Z \in \mathcal L$ such that
$(\Mss{},\hat Z) \in \Acs$. 
\end{thm}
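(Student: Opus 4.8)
The plan is to realize the minimal supersolution as the \cadlag\ supermartingale $\Mssp{}$ of Proposition \ref{prop2}, to read off its control process by a semimartingale convergence argument, and then to identify $\Mssp{}$ with $\Mss{}$. The second pasting property of Proposition \ref{prop2} shows that the family of value processes of elements of $\Acs$ is directed downwards, so the essential infimum in \eqref{eq32} is attained along a decreasing sequence $(Y^n,Z^n)\in\Acs$ with $Y^n\downarrow\Mss{}$ on the rationals, hence $Y^n_{t+}\downarrow\Mssp{t}$. By Lemma \ref{lemma1} every $Y^n$ is a supermartingale with canonical decomposition $Y^n=Y^n_0+M^n-A^n$, where $M^n=\int Z^n\,dW$ and $A^n$ is increasing and predictable, and the supersolution property \eqref{eq03} is equivalent to the domination $A^n_t-A^n_s\ge\int_s^t g_u(Y^n_u,Z^n_u)\,du$ for $s\le t$.

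The crux is to strengthen this pointwise decreasing convergence to uniform convergence of $Y^n$ to $\Mssp{}$, the input required by \citet{ProtterBarlow}. I would obtain it by a stopping-and-pasting scheme: for a given threshold, let $\tau$ be the first time the nonnegative gap $Y^n-\Mssp{}$ exceeds it and, on $\{\tau<T\}$, paste onto $Y^n$ at $\tau$, via the pasting properties of Proposition \ref{prop2}, a value process from the minimizing sequence that is within the threshold of $\Mssp{}$ at $\tau$; here normalization \textsc{(nor)} and positivity \textsc{(pos)} are used to keep the pasted objects admissible supersolutions and to control their increasing parts. Concatenating the resulting supermartingales along the associated increasing sequence of stopping times is precisely the setting of Proposition \ref{lemma2}, which ensures that the pasted process is again a \cadlag\ supermartingale with an approximation-independent limit; iterating this procedure yields supersolutions converging uniformly to $\Mssp{}$.

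Granting uniform convergence, I would apply the semimartingale convergence theorem of \citet{ProtterBarlow} to the special semimartingales $Y^n$: their martingale parts $M^n=\int Z^n\,dW$ converge to a continuous local martingale of the form $\int\hat Z\,dW$ with $\hat Z\in\mathcal L$, while the increasing parts $A^n$ converge to an increasing predictable process $B$, so that $\Mssp{}=\Mssp{0}+\int\hat Z\,dW-B$. Admissibility of $\hat Z$ follows since $\int\hat Z\,dW$ is a limit of the supermartingales $\int Z^n\,dW$. Passing to a subsequence along which $Z^n\to\hat Z$ and $Y^n\to\Mssp{}$ for $P\otimes dt$-almost every $(\omega,u)$, the supersolution inequality for $(\Mssp{},\hat Z)$ is obtained by letting $n\to\infty$ in $A^n_t-A^n_s\ge\int_s^t g_u(Y^n_u,Z^n_u)\,du$, using the joint lower semicontinuity of $g$ together with the extended Fatou lemma of Section \ref{sec01} (applicable because $g\ge0$) to get $B_t-B_s\ge\int_s^t g_u(\Mssp{}_u,\hat Z_u)\,du$. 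Hence $(\Mssp{},\hat Z)\in\Acs$.

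Because $(\Mssp{},\hat Z)\in\Acs$, the value $\Mssp{t}$ enters the essential infimum \eqref{eq32} for each $t$, giving $\Mss{}\le\Mssp{}$; together with $\Mssp{}\le\Mss{}$ from \eqref{eq34} this yields $\Mss{}=\Mssp{}$ and therefore $(\Mss{},\hat Z)\in\Acs$. Uniqueness of $\hat Z$ is then immediate from Lemma \ref{lemma1}, which fixes the control associated with a given value process. I expect the uniform-convergence step to be the genuine difficulty: decreasing convergence on a countable dense set provides no uniform control by itself, and it is precisely the stopping-time pasting combined with Proposition \ref{lemma2} that bridges this gap and makes the semimartingale convergence results applicable.
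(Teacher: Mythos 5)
Your overall architecture coincides with the paper's: uniform approximation of $\Mssp{}$ by a sequence of supersolutions, the Barlow--Protter semimartingale convergence theorem to extract $\hat Z$ from the martingale parts, verification of the supersolution inequality via joint lower semicontinuity and Fatou, and the identification $\Mss{}=\Mssp{}$ with uniqueness from Lemma \ref{lemma1}. The minor variation in how you verify the inequality (passing to the limit in $A^n_t-A^n_s\ge\int_s^t g_u(Y^n_u,Z^n_u)\,du$ rather than in \eqref{eq03} directly) is immaterial.

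The genuine gap is in the step you yourself flag as the difficulty. ``Iterating'' the stopping-and-pasting procedure does not suffice: each pasting only postpones the first exit time from the $\ve$-tube around $\Mssp{}$, and after countably many iterations these exit times $\tau_1\le\tau_2\le\cdots$ may accumulate at some $\tau^*<T$. Proposition \ref{lemma2} does give you a \cadlag\ supermartingale up to $\tau^*$, but its left limit there satisfies $\bar Y_{\tau^*-}=\lim_m Y^m_{\tau_m-}\ge\Mssp{\tau^*-}+\ve$, so the concatenated process is no closer to $\Mssp{}$ than before, and nothing in your scheme guarantees that countably many rounds exhaust $[0,T]$. The paper resolves this with Zorn's lemma on the preorder \eqref{eq36} (so that a maximal element exists, whose exit time is then forced to equal $T$), and --- crucially --- with an explicit bridge across the accumulation point: on $[\tau^*,\sigma_n[$ it inserts the \emph{constant} value $\Mssp{\tau^*}+\frac{\ve}{2}$ with \emph{zero} control, which is a legitimate piece of a supersolution precisely because \textsc{(nor)} gives $g_t(y,0)=0$, and only after riding this constant until $\Mss{}$ and $\Mssp{}$ are within $\frac{\ve}{8}$ of $\Mssp{\tau^*}$ does it paste in a new near-optimal supersolution, producing the required downward jumps at $\tau^*$ and at $\sigma$. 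Your proposal invokes \textsc{(nor)} only in passing, without this construction; but without it the upper-bound step for chains --- and hence the entire uniform approximation of $\Mssp{}$ --- does not go through.
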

\begin{proof}
\emph{Step 1: Uniform Limit and Verification.} 
Since $\Acs\neq\emptyset$, there exist $(Y^b,Z^b)\in\Acs$.
From now on we restrict our focus to supersolutions $(\bar Y,\bar Z)$ in $\Acs$ satisfying $\bar Y_0\le Y^b_0$.
Indeed, since we are only interested in minimal supersolutions, we can paste any
value process of $(Y,Z)\in\Acs$ at $\tau:=\inf\{t>0\,:\,Y^b_t>Y_t\}\wedge T$, such that 
$\bar Y := Y^b1_{[0,\tau[} + Y1_{[\tau,T]}$ satisfies $\bar Y_0\le Y^b_0$, where the corresponding control $\bar Z$
is obtained as in Proposition \ref{prop2}.\\

Assume for the beginning that we can find a sequence
$\brak{(Y^n,Z^n)}$ within $\Acs$ such that
\be\label{eq23}
\lim_{n\to\infty}\norm{Y^n - \Mssp{}}_{\mathcal R^\infty}=0\,.
\ee 
Since all $Y^n$ are \cadlag\, supermartingales, they are, by the Doob-Meyer
decomposition theorem, see \citep[Chapter III, Theorem 13]{protter}, special
semimartingales with canonical decomposition $Y^n=Y^n_0+M^n -A^n$ as in \eqref{eq21}.
The supermartingale property of  
all $\int Z^n dW$ and $\xi\in L^1(\F_T)$, compare Lemma \ref{lemma1}, imply that $\E{A^n_T} \le Y^b_0 - \E{\xi} \in L^1(\F_T)$. 
Hence, since each $A^n$ is increasing, $\sup_n E[\int_0^T |dA^n_s|]<\infty$.
As \eqref{eq23} implies in particular that $\lim_{n\to\infty}\E{(Y^n - \Mssp{})^*}=0$, it follows 
from \citep[Theorem 1 and Corollary 2]{ProtterBarlow} that $\Mssp{}$ is a special
semimartingale with canonical 
decomposition $\Mssp{} = \Mssp{0} + M - A$ and that
\bes
\lim_{n\to\infty}\norm{M^n-M}_{\mathcal H^1} = 0\e\e,\e\e
\lim_{n\to\infty}\E{(A^n-A)^*} = 0\,.
\ees
The local martingale $M$ is continuous
and allows for a representation
of the form $M=M_0 + \int \hat ZdW$, where $\hat Z\in\mathcal L$, compare \citep[Chapter IV, Theorem 43]{protter}.
Since
\bes
\E{ \brak{\int_{0}^{T}\brak{Z^n_u-\hat Z_u}^2 du }^{1/2} } \xrightarrow[n\rightarrow +\infty]{}0\,,
\ees
we have that, up to a subsequence, $(Z^n)$ converges $P\otimes dt$-almost surely to $\hat Z$
and $\lim_{n\to\infty}\int_0^tZ^ndW=\int_0^t\hat ZdW$, for
all $t\in[0,T]$, $P$-almost surely, 
due to the Burkholder-Davis-Gundy inequality.
In particular, $\lim_{n\to\infty} Z^n(\omega)=\hat Z(\omega)$, $dt$-almost surely, for almost all $\omega\in\Omega$.

In order to verify that $(\Mssp{},\hat Z) \in \Acs$, we will use the convergence obtained above.
~More precisely, for all $0\le s\le t\le T$, Fatou's lemma together with \eqref{eq23} and the lower semicontinuity of the generator
yields
\bems
\e\Mssp{s} - \int_s^t g_u(\Mssp{u},\hat Z_u)du + \int_s^t \hat Z_u dW_u \\
\ge \limsup_n\brak{ Y_s^n- \int_s^t g_u(Y^n_u,Z^n_u )du + \int_s^t Z^n_udW_u }
\ge \limsup_n Y^n_t = \Mssp{t}\,.
\end{multline*}
The above, the positivity of $g$ and $\Mssp{}\ge\EF{\xi}{\cdot}$ imply that $\int \hat Z dW \ge \EF{\xi}{\cdot} - \Mssp{0}$. 
Hence, being bounded from below by a martingale, the continuous local martingale $\int \hat ZdW$ is a supermartingale.
Thus, $\hat Z$ is admissible and $(\Mssp{},\hat Z)\in\Acs$ and therefore, by Lemma \ref{lemma1}, $\hat Z$ is unique. 
Since we know by Proposition \ref{prop2} that $\Mss{} \ge \Mssp{}$, we deduce that $\Mss{} = \Mssp{}$ by the definition of
$\Mss{}$, 
identifying $(\Mss{},\hat Z)$ as the unique minimal supersolution.\\

\emph{Step 2: A preorder on $\Acs$.} 
As to the existence of $((Y^n,Z^n))$ satisfying \eqref{eq23},
it is sufficient to show that, for arbitrary $\ve>0$,
we can find a supersolution $(Y^{\ve},Z^{\ve})$ 
satisfying
\be\label{eq22}
\norm{Y^{\ve}-\Mssp{}}_{\mathcal R^\infty} \le \ve\,.
\ee 
We define the following preorder\footnote{Note that, 
in order to apply Zorn's lemma, 
we need a partial order instead of just a preorder.
To this end we consider equivalence classes of processes.
Two supersolutions $(Y^1,Z^1),(Y^2,Z^2)\in\Acs$ are said to be equivalent, that is, $(Y^1,Z^1)\sim(Y^2,Z^2)$, 
if $(Y^1,Z^1)\preceq(Y^2,Z^2)$ and $(Y^2,Z^2)\preceq(Y^1,Z^1)$.
This means that they are equal up to their corresponding stopping time $\tau_1=\tau_2$ as in \eqref{eq35}.
This induces a partial order on the set of equivalence classes and hence the use of Zorn's lemma
is justified.}
 on $\Acs$
\be\label{eq36}
(Y^1,Z^1)\preceq(Y^2,Z^2)\e\Leftrightarrow\e \tau_1 \le \tau_2\e\,\text{and}\e\, (Y^1,Z^1)1_{[0,\tau_1[} = (Y^2,Z^2)1_{[0,\tau_1[}\,,
\ee
where, for $i=1,2$,
\be\label{eq35}
\tau_i = \inf\crl{t\ge0\,\,:\,\,Y^i_t > \Mssp{t} + \ve } \wedge T\,.
\ee
For any totally ordered chain $((Y^i,Z^i))_{i\in I}$ within $\Acs$ with corresponding stopping times $\tau_i$, 
we want to construct an upper bound.
If we consider 
\bes
\tau^*= \esssup_{i\in I} \tau_i\,,
\ees
we know by the monotonicity of the stopping times that we can find a 
monotone subsequence $(\tau_m)$ of $(\tau_i)_{i\in I}$ such that $\tau^*=\lim_{m\to\infty}\tau_m$.
In particular, $\tau^*$ is a stopping time.
Furthermore, the structure of the preorder \eqref{eq36} yields that the value processes of the 
supersolutions $((Y^m,Z^m))$ corresponding to the stopping times $(\tau_m)$ satisfy
\be\label{eq35a}
Y^{m+1}_{\tau_m}\le Y^{m+1}_{\tau_m-}=Y^{m}_{\tau_m-}\e,\e\text{for all $m\in\N$}\,,
\ee
where the inequality follows from the fact that all $Y^m$ are \cadlag\, supermartingales, see the proof of Proposition \ref{lemma2}.\\

\emph{Step 3: A candidate upper bound $(\bar Y,\bar Z)$ for the chain $((Y^i,Z^i))_{i\in I}$.}
We construct a candidate upper bound $(\bar Y,\bar Z)$ for $((Y^i,Z^i))_{i\in I}$ satisfying
$P[\tau(\bar Y)>\tau^*\mid\tau^*<T]=1$, with $\tau(\bar Y)$ as in \eqref{eq35}.

To this end, let $(\bar\sigma_n)$ be a decreasing sequence of stopping times taking values in the rationals
and converging towards $\tau^*$
from the right\footnote{
Compare \citep[Problem 2.24]{Karatzas1991}.}.
Then the stopping times $\hat \sigma_n := \bar\sigma_n\wedge \,T$ satisfy $\hat\sigma_n 
> \tau^*$ and $\hat\sigma_n\in\Q$, on $\crl{\tau^*<T}$, for all $n$ big enough.
Let us furthermore define the following stopping time
\be\label{eq37}
\bar \tau := \inf\crl{t>\tau^*\,:\, 1_{\crl{\tau^*<T}}\abs{\Mssp{\tau^*} - \Mssp{t}} > \frac{\ve}{2}}\wedge T\,.
\ee
Due to the right-continuity of $\Mssp{}$ in $\tau^*$, it follows that $\bar \tau > \tau^*$ on $\{\tau^*<T\}$.
We now set
\ba\label{eq37a}
\sigma_n := \hat\sigma_n\wedge \bar \tau\,,\q\text{for all $n\in\N$}.
\end{align}
The above stopping times still satisfy $\lim_{n\to\infty}\sigma_n=\tau^*$ and $\sigma_n>\tau^*$ 
on $\crl{\tau^*<T}$, for all $n\in\N$.
We further define the following sets
\be\label{eq37b}
A_n := \crl{\abs{\Mssp{\tau^*} - \Mss{\sigma_m}}\vee\abs{\Mssp{\tau^*} - \Mssp{\sigma_m}} 
< \frac{\ve}{8}\,, \q\text{for all $m\ge n$} }\,.
\ee 
They satisfy $A_n\subset A_{n+1}$ and $\bigcup_n A_n = \Omega$ by definition of the 
sequence $(\sigma_m)$\footnote{Since on $\{\tau^*<T\}$, $\bar\tau>\tau^*$ and $\lim_{n}\hat\sigma_n=\tau^*$ 
with $\hat\sigma_n\in\Q$, it is ensured
that there exists some $n_0\in\N$, depending on $\omega$, such that $\sigma_n$ takes values in the rationals for all $n\ge n_0$.
By definition of $\Mssp{}$ as the right-hand side limit of $\Mss{}$ on the rationals 
and due to the right-continuity of $\Mssp{}$ in $\tau^*$, both inequalities
in the definition of $A_n$ are satisfied for all $n\ge n_0$.}. 
Note further that $A_n \in \F_{\sigma_n}$
holds true by construction.
By Proposition \ref{prop2} 
we deduce\footnote{For a detailed proof, see \citep[Proposition 3.2.]{CSTP}.} that, for each $n\in\N$, there exist $(\tilde Y^n,\tilde Z^n)\in\Acs$ such 
that
\be\label{eq37c}
\tilde Y^n_{\sigma_n} \le \Mss{\sigma_n} + \frac{\ve}{8}\,.
\ee
Next we partition $\Omega$ into $B_n :=A_n\backslash A_{n-1}$, where we set $A_0:=\emptyset$ and $\tau_0:=0$, and define the candidate upper bound as 
\ba
\bar Y &= \sum_{m\ge1} Y^m1_{[\tau_{m-1},\tau_m[}
 + 1_{\crl{\tau^*<T}}
\sum_{n\ge1} 1_{B_n}\edg{ \Big(\Mssp{\tau^*} + \frac{\ve}{2}\Big)1_{[\tau^*,\sigma_n[} +  \tilde Y^n1_{[\sigma_n,T[}} \e\e,\e\e 
\bar Y_T=\xi\,,\label{eq38a}\\
\bar Z &= 
\sum_{m\ge1} Z^m1_{]\tau_{m-1},\tau_m]} + 
1_{\crl{\tau^*<T}}\sum_{n\ge1} \tilde Z^n1_{B_n}1_{]\sigma_n,T]}\,.\label{eq38b}
\end{align}
\text{}\\
\emph{Step 4: Verification of $(\bar Y,\bar Z)\in\Acs$.}
By verifying that the pair $(\bar Y,\bar Z)$ is an element of $\Acs$,
we identify $(\bar Y,\bar Z)$ as an upper bound for the chain $((Y^i,Z^i))_{i\in I}$.
Even more, $P[\tau(\bar Y)>\tau^*\mid\tau^*<T]=1$ holds true, 
since, on the set $B_n$, we have $\bar Y_t=\Mssp{\tau^*}+\frac{\ve}{2}\le\Mssp{t} + \ve$, 
for all $t\in [\tau^*,\sigma_n[$, 
due to the definition of $\bar\tau$ in \eqref{eq37}.\\ 

\emph{Step 4a: The value process $\bar Y$ is an element of $\mathcal S$.}
By construction, the only thing to show is that $\bar Y_{\tau^{*}-}$, the left limit at $\tau^*$, exists.
This follows from Proposition \ref{lemma2}, since, by means of $((Y^m,Z^m))\subset\Acs$  and $\xi\in L^1(\F_T)$, all $Y^m$  
are \cadlag\, supermartingales, see Lemma \ref{lemma1}, which are bounded from below by a uniformly integrable martingale, more precisely $Y^m\ge\EF{\xi}{\cdot}$,
for all $m\in\N$, and satisfy \eqref{eq35a}.\\

\emph{Step 4b: The control process $\bar Z$ is an element of $\mathcal L$ and admissible.}
We proceed by defining, for each $n\in\N$, the processes
$
\bar Z^n := \sum_{m=1}^nZ^m1_{]\tau_{m-1},\tau_m]} = \bar Z 1_{[0,\tau_n]}=Z^n1_{[0,\tau_n]}$ 
and $N^n := \int \bar Z^ndW = \int Z^n1_{[0,\tau_n]}dW$,
where the equalities follow from \eqref{eq36}.
Observe that $N^{n+1}1_{[0,\tau_{n}]}=N^{n}1_{[0,\tau_{n}]}$, for all $n\in\N$, 
and that \textsc{(pos)}, \eqref{eq03} and the supermartingale property of $\int Z^ndW$ imply
\be\label{eq39}
N^n1_{[\tau_{n-1},\tau_n[}\ge 1_{[\tau_{n-1},\tau_n[} (- \EF{\xi^-}{\cdot} - Y^b_0)\,. 
\ee
By means of \eqref{eq39} and since $\xi^-\in L^1(\F_T)$, with $N^\infty:=\lim_nN^n_{\tau_{n-1}}$,
the process 
\bes
N = \sum_{n\ge1} N^n1_{[\tau_{n-1},\tau_n[} + 1_{[\tau^*,T]}N^\infty
\ees
is a well-defined continuous supermartingale due to Proposition \ref{lemma2}. 
Hence we may define a localizing sequence by setting $\kappa_n:=\inf\{t\ge 0 \,:\, \abs{N_t}> n\}\wedge T$ 
and deduce that $N$ is a continuous local martingale, because
$N^{\kappa_n}$ is a uniformly integrable martingale, for all $n\in\N$.
Indeed, for each $n\in\N$ and $m\in\N$, the process $(N^m)^{\kappa_n}$,  
being a bounded stochastic integral, is a martingale. 
Moreover, the family $(N^m_{\kappa_n\wedge t})_{m\in\N}$ is uniformly integrable
and $N_{\kappa_n\wedge t}=\lim_mN^m_{\kappa_n\wedge t}$, for all $t\in[0,T]$. 
Consequently, 
$E[N^{\kappa_n}_t\,|\,\F_s]=\lim_mE[N^m_{\kappa_n\wedge t}\,|\,\F_s]
=\lim_mN^m_{\kappa_n\wedge s}=N^{\kappa_n}_s$, for all $0\le s\le t\le T$, and the claim follows.
Since the quadratic variation of a continuous local martingale is continuous and unique, see 
\citep[page 36]{Karatzas1991},
we obtain
\bes
\int_0^{\tau^*}\bar Z_u^2du = \lim_n \int_0^{\kappa_n \wedge \tau^*}\bar Z_u^2 du 
=\lim_n \langle N \rangle_{\kappa_n \wedge \tau^*} = \langle N\rangle_{\tau^*} < \infty\,.
\ees
Observe that $\sigma := \sum_{n\ge 1}1_{B_n}\sigma_n$ is an element of $\mathcal T$.
Indeed, $\{\sigma\le t\}=\bigcup_{n\ge 1}(B_n\cap\{\sigma\le t\})=\bigcup_{n\ge 1}(B_n\cap\{\sigma_n\le t\}) \in \F_t$,
for all $t\in[0,T]$, since $B_n \in \F_{\sigma_n}$.
From $\bar Z1_{]\tau^*,\sigma]}=0$ we get that
\bes
\int_0^T \bar Z_u^2du = \langle N \rangle_{\tau^*}  
+ 1_{\crl{\tau^*<T}}\sum_{n\ge 1}1_{B_n} \int_{\sigma}^T(\tilde Z^n_u)^2du < \infty\,,
\ees
since $(\tilde Z^n)\subset\mathcal L$.
Hence 
we conclude that $\bar Z \in \mathcal L$.
As for the supermartingale property of $\int \bar Z dW$, observe that 
\bes
\int_0^{t\wedge\tau^*} \bar Z_udW_u 
= \lim_{n\to\infty}\int_0^{t\wedge\tau_n} Z^n_udW_u
\ge \lim_{n\to\infty}- \EF{\xi^-}{t\wedge\tau_n} - Y^b_0 = - \EF{\xi^-}{t\wedge\tau^*} - Y^b_0\,,
\ees
where the inequality follows from \eqref{eq03} and \textsc{(pos)}. 
Being bounded from below by a martingale, we deduce by Fatou's lemma 
that $\bar Z1_{[0,\tau^*]}$ is admissible. 
Since $\bar Z1_{]\tau^*,\sigma]}=0$ and all $\tilde Z^n$ are admissible, it follows from Proposition \ref{prop2}
that $\bar Z$ is indeed admissible.\\

\emph{Step 4c: The pair $(\bar Y,\bar Z)$ is a supersolution.} 
Finally, showing that $(\bar Y,\bar Z)$ satisfy  \eqref{eq03} 
identifies $(\bar Y,\bar Z)$ as an element of $\Acs$.
Observe first that, for all $0\le s\le t\le T$ and all $m\in\N$, the expression
$\bar Y_s - \int_s^t g_u(\bar Y_u,\bar Z_u)du + \int_s^t \bar Z_u dW_u$ can
be written as
\bem 
\bar Y_s - \int_{s}^{(\tau_m\vee s)\wedge t}g_u(\bar Y_u,\bar Z_u)du +\int_{s}^{(\tau_m\vee s)\wedge t}\bar Z_udW_u\\ 
-\int_{(\tau_m\vee s)\wedge t}^{(\tau^*\vee s)\wedge t} g_u(\bar Y_u,\bar Z_u)du 
+  \int_{(\tau_m\vee s)\wedge t}^{(\tau^*\vee s)\wedge t} \bar Z_udW_u
- \int_{(\tau^*\vee s)\wedge t}^{(\sigma\vee s)\wedge t} g_u(\bar Y_u,\bar Z_u)du\\  
+ \int_{(\tau^*\vee s)\wedge t}^{(\sigma\vee s)\wedge t} \bar Z_udW_u
- \int_{(\sigma\vee s)\wedge t}^t g_u(\bar Y_u,\bar Z_u)du + \int_{(\sigma\vee s)\wedge t}^t \bar Z_udW_u\,.\label{veri1}
\end{multline} 
Now, we have that 
\be\label{veri3a}
\bar Y_s - \int_{s}^{(\tau_m\vee s)\wedge t}g_u(\bar Y_u,\bar Z_u)du 
+\int_{s}^{(\tau_m\vee s)\wedge t}\bar Z_udW_u \,\,\ge\,\, \bar Y_{(\tau_m\vee s)\wedge t}\,,
\ee
by Proposition \ref{prop2}, 
since $((Y^m,Z^m))\subset\Acs$ and $Y^m_{\tau_m-}\ge Y^{m+1}_{\tau_{m}}$, for all $m\in\N$, due to \eqref{eq35a}. 
By letting $m$ tend to infinity and noting that 
\bes
\lim_{m\to\infty}\int_{(\tau_m\vee s)\wedge t}^{(\tau^*\vee s)\wedge t}\bar Z_udW_u=0\q\text{and}\q
\lim_{m\to\infty}\int_{(\tau_m\vee s)\wedge t}^{(\tau^*\vee s)\wedge t}g_u(\bar Y_u,\bar Z_u)du=0\,,\ees  
\eqref{veri1} and \eqref{veri3a} yield that
\bem
\bar Y_s - \int_s^t g_u(\bar Y_u,\bar Z_u)du + \int_s^t \bar Z_u dW_u\\ 
\ge \bar Y_{((\tau^*-)\vee s)\wedge t} - \int_{(\tau^*\vee s)\wedge t}^{(\sigma\vee s)\wedge t} g_u(\bar Y_u,\bar Z_u)du  
+ \int_{(\tau^*\vee s)\wedge t}^{(\sigma\vee s)\wedge t} \bar Z_udW_u\\
- \int_{(\sigma\vee s)\wedge t}^t g_u(\bar Y_u,\bar Z_u)du + \int_{(\sigma\vee s)\wedge t}^t \bar Z_udW_u\,.\label{veri5}
\end{multline}
We now use that $\bar Y$ can only jump downwards at $\tau^*$.%
~Indeed, since $\bar Y$ is \cadlag, in particular $\bar Y_{\tau^{*}-}$, the left limit 
at $\tau^*$, exists and is unique, $P$-almost surely.
Furthermore, $\lim_{m\to\infty}\bar Y_{\tau_m-}=\bar Y_{\tau^*-}$ 
and thus \[\bar Y_{\tau^{*}-}=\lim_m\bar Y_{\tau_m-}=\lim_m Y^m_{\tau_m-} \ge \lim_m \Mssp{\tau_m} + \ve  = \Mssp{\tau^{*}-} + \ve
\ge \Mssp{\tau^*}+\ve > \bar Y_{\tau^*}\,.\]
The second inequality holds, since the \cadlag\,
supermartingale $\Mssp{}$ can only jump downwards, see the proof of Proposition \ref{lemma2}.
Hence, \eqref{veri5} can be further estimated by
\bem\label{veri6}
\bar Y_s - \int_s^t g_u(\bar Y_u,\bar Z_u)du + \int_s^t \bar Z_u dW_u 
\ge \bar Y_{(\tau^*\vee s)\wedge t} 
- \int_{(\sigma\vee s)\wedge t}^t g_u(\bar Y_u,\bar Z_u)du + \int_{(\sigma\vee s)\wedge t}^t \bar Z_udW_u\,,
\end{multline}
where we used that \[\int_{(\tau^*\vee s)\wedge t}^{(\sigma\vee s)\wedge t} g_u(\bar Y_u,\bar Z_u)du  
= \int_{(\tau^*\vee s)\wedge t}^{(\sigma\vee s)\wedge t} \bar Z_udW_u=0\,,\] 
due to \eqref{eq38b}, the definition of $\sigma$ and \textsc{(nor)}.
Now observe that $\bar Y_{(\tau^*\vee s)\wedge t}\ge\bar Y_{(\sigma\vee s)\wedge t}$, 
since $\bar Y1_{[\tau^*,\sigma[}=(\Mssp{\tau^*}+\frac{\ve}{2})1_{[\tau^*,\sigma[}$ and $\bar Y$ can only jump downwards at $\sigma$.
Indeed, on the set $B_n$, by means of \eqref{eq38a}, \eqref{eq37b} and \eqref{eq37c} holds   
\bems
\bar Y_{\sigma_{n-}}=\Mssp{\tau^*} + \frac{\ve}{2} = \Mssp{\tau^*} - \Mss{\sigma_n} + \Mss{\sigma_n} + \frac{\ve}{2}\\
\ge -\frac{\ve}{8} + \Mss{\sigma_n} + \frac{\ve}{2}
\ge \tilde Y^n_{\sigma_n} -\frac{\ve}{8}+\frac{\ve}{8} = \tilde Y^n_{\sigma_n}=\bar Y_{\sigma_n}\,.
\end{multline*}
Consequently, 
\bem\label{veri8}
\bar Y_s - \int_s^t g_u(\bar Y_u,\bar Z_u)du + \int_s^t \bar Z_u dW_u \\
\ge \bar Y_{(\sigma\vee s)\wedge t} 
- \int_{(\sigma\vee s)\wedge t}^t g_u(\bar Y_u,\bar Z_u)du + \int_{(\sigma\vee s)\wedge t}^t \bar Z_udW_u\ge \bar Y_t\,\,,                               
\end{multline}
where the second inequality in \eqref{veri8} follows from $((\tilde Y^n,\tilde Z^n))\subset\Acs$ and 
Proposition \ref{prop2}.\\

\emph{Step 5: The maximal element $(Y^M,Z^M)$.}
By Zorn's lemma, there exists a maximal 
element $(Y^M,Z^M)$ in $\Acs$ with respect to the preorder \eqref{eq36}, satisfying, without loss of generality,
$Y^M_T=\xi$.
By showing that the corresponding stopping time satisfies $\tau^M=T$,
we have obtained a supersolution $(Y^M,Z^M)$ satisfying $\Vert Y^M-\Mssp{}\Vert_{\mathcal R^\infty} \le \ve$,
due to the definition of $\tau^M$ in analogy to \eqref{eq35}.
Thus, choosing $Y^M = Y^\ve$ in \eqref{eq22} would finish our proof.

But on $\{\tau^M<T\}$ we could consider the chain consisting only of $(Y^M,Z^M)$ and, analogously to 
\eqref{eq38a} and \eqref{eq38b}, construct an upper bound $(\bar Y,\bar Z)$, with corresponding stopping 
time $\tau(\bar Y)$ as in \eqref{eq35}, 
satisfying $P[\tau(\bar Y)>\tau^M\mid\tau^M<T] =1$.
This yields $P[\tau^M<T]\le P[\tau(\bar Y)>\tau^M]=0$, due to 
the maximality of $\tau^M$.  
Hence we deduce that $\tau^M=T$.
\end{proof}

The techniques used in the proof of Theorem \ref{thm31} show that $\Acs$
exhibits a certain closedness under monotone limits of decreasing supersolutions.
\begin{thm}\label{thm31a}
Let $g$ be a generator satisfying \textsc{(pos)} and \textsc{(nor)} and $\xi\in L^0(\F_T)$ a 
terminal condition such that $\xi^-\in L^1(\F_T)$.
Let furthermore $((Y^n,Z^n))$ be a decreasing sequence within $\Acs$
with pointwise limit $\bar Y_t:=\lim_n Y^n_t$, for $t\in[0,T]$.
Then $\bar Y$ is a supermartingale and it holds
\be\label{eq31a1}
\bar Y_t\ge\hat Y_t:= \lim_{\underset{s\in\Q}{s\downarrow t}} \bar Y_s\q\text{for all $t\in[0,T)$}\,.
\ee
Moreover, with $\hat Y_T:=\xi$, there is a sequence $((\tilde Y^n,\tilde Z^n))\subset\Acs$ such that 
$\lim_{n}\Vert \tilde Y^n - \hat Y\Vert_{\mathcal R^\infty}=0$, 
and a unique control $\hat Z\in\mathcal L$ such that $(\hat Y,\hat Z)\in\Acs$.
\end{thm}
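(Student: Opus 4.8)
The plan is to replay the proof of Theorem \ref{thm31} almost verbatim, with the essential infimum $\Mss{}$ replaced by the pointwise limit $\bar Y$ and its right limit $\Mssp{}$ replaced by $\hat Y$. Everything in that proof rests on three structural facts, so the work is to recover them for the present $\bar Y$ and $\hat Y$: that $\bar Y$ is a supermartingale dominating its \cadlag\ right limit $\hat Y$, that $\hat Y$ stays above the martingale $\EF{\xi}{\cdot}$, and that the values of $\bar Y$ at stopping times can be approximated from above by genuine supersolutions. For the first, I would argue as follows. By Lemma \ref{lemma1} each $Y^n$ is a supermartingale with $Y^n_t \ge \EF{\xi}{t}$; since the sequence decreases to $\bar Y$ and is bounded below by the integrable process $\EF{\xi}{\cdot}$, conditional monotone convergence gives $\EF{\bar Y_t}{s} = \lim_n \EF{Y^n_t}{s} \le \lim_n Y^n_s = \bar Y_s$ for $0\le s\le t\le T$, so $\bar Y$ is a supermartingale with $\bar Y \ge \EF{\xi}{\cdot}$. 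The existence of the right limit $\hat Y$ along the rationals, the inequality \eqref{eq31a1}, and the \cadlag\ supermartingale property of $\hat Y$ then follow from the same supermartingale theory used for $\Mssp{}$ in Proposition \ref{prop2} (cf. \citep[Proposition 3.5]{CSTP}); taking right limits in $\bar Y \ge \EF{\xi}{\cdot}$ and using right-continuity of $\EF{\xi}{\cdot}$ yields $\hat Y \ge \EF{\xi}{\cdot}$, and we set $\hat Y_T := \xi$.

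The second fact is where the explicit decreasing sequence must do the work previously supplied by the essential-infimum definition behind \eqref{eq37c}. Given a stopping time $\sigma$ and $\ve>0$, I would set $B_n := \{\, \min\{k : Y^k_\sigma \le \bar Y_\sigma + \ve\} = n \,\}$, an $\F_\sigma$-partition of $\Omega$ because $Y^k_\sigma \downarrow \bar Y_\sigma$ pointwise. Monotonicity gives $Y^1_{\sigma-} \ge Y^n_{\sigma-}$, and since in a Brownian filtration the martingale part of each supermartingale $Y^n$ is continuous, so that $Y^n$ can only jump downwards (see the proof of Proposition \ref{lemma2}), one has $Y^n_{\sigma-} \ge Y^n_\sigma$; hence the pasting hypothesis $Y^1_{\sigma-}1_{B_n} \ge Y^n_\sigma 1_{B_n}$ of Proposition \ref{prop2}(2) holds. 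Pasting $(Y^1,Z^1)$ on $[0,\sigma[$ with $(Y^n,Z^n)$ on $B_n \cap [\sigma,T]$ then produces $(\tilde Y,\tilde Z)\in\Acs$ with $\tilde Y_\sigma \le \bar Y_\sigma + \ve$, the exact analog of \eqref{eq37c}.

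With $\hat Y$ playing the role of $\Mssp{}$ and this approximation in hand, Steps 2 through 5 of the proof of Theorem \ref{thm31} — the preorder on $\Acs$, the Zorn's lemma chain with supremum stopping time $\tau^*$, the candidate upper bound assembled from the $\tilde Y^n$, and the argument that its stopping time reaches $T$ — carry over word for word, yielding a sequence $((\tilde Y^n,\tilde Z^n))\subset\Acs$ with $\lim_n \norm{\tilde Y^n - \hat Y}_{\mathcal R^\infty}=0$. I would then feed this sequence into Step 1 of that same proof: the uniform bound $\sup_n E[\int_0^T |dA^n_s|]<\infty$, the convergence results of \citep{ProtterBarlow} identifying a continuous $M=\int \hat Z\,dW$ with $\hat Z\in\mathcal L$, and the Fatou/lower-semicontinuity verification all apply unchanged to give $(\hat Y,\hat Z)\in\Acs$; admissibility of $\hat Z$ follows from $\hat Y \ge \EF{\xi}{\cdot}$, and uniqueness from Lemma \ref{lemma1}.

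The one genuinely new point, and the step I expect to need the most care, is the approximation property of the second paragraph. After renaming $\Mss{},\Mssp{}$ everything else is a faithful transcription of Theorem \ref{thm31}, but there the approximation was free from the essential-infimum definition, whereas here it must be built by hand from the decreasing sequence. The delicate verifications are that the pasting hypothesis of Proposition \ref{prop2}(2) is satisfied — which reduces precisely to monotonicity together with the downward-jump property — and that $\bigcup_n B_n = \Omega$ up to a null set, which is exactly the pointwise convergence $Y^k_\sigma \downarrow \bar Y_\sigma$. Once these are secured, no further idea is required beyond the machinery of Theorem \ref{thm31}.
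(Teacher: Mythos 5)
Your proposal is correct and takes essentially the same route as the paper, whose own proof simply states that the result follows by monotone convergence, the argument for \eqref{eq34}, and then ``adapting all steps in the proof of Theorem \ref{thm31} and replacing $\Mssp{}$ by $\hat Y$.'' Your explicit construction of the approximating supersolutions via the $\F_\sigma$-partition $B_n$ and Proposition \ref{prop2} supplies exactly the one detail the paper leaves implicit, namely the analog of \eqref{eq37c}, which in Theorem \ref{thm31} came for free from the essential-infimum definition, and your verification of the pasting hypothesis there is sound.
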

\begin{proof}
First, $\bar Y$ is a supermartingale by monotone convergence.
Inequality \eqref{eq31a1} is then proved analogously to \eqref{eq34} as in \citep[Proposition 3.5]{CSTP}.
The rest follows by adapting all steps in the proof of Theorem \ref{thm31} and replacing $\Mssp{}$ by $\hat Y$.
\end{proof}
In a next step, we turn our focus to the question whether it is possible to find a minimal supersolution within
$\Acs$, the associated control process $Z$ of which belongs to $\mathcal L^1$ and 
therefore $\int ZdW$
constitutes a true martingale instead of only a supermartingale.%
~To this end we consider the following subset of $\Acs$
\be
\Acso := \crl{(Y,Z) \in \Acs \,:\, Z\in\mathcal L^1}\,.
\ee
By imposing stronger assumptions on the terminal condition $\xi$, the next theorem yields the existence
of a unique minimal supersolution in $\Acso$.
\begin{thm}\label{thm32}
Assume that the generator $g$ satisfies \textsc{(pos)} and \textsc{(nor)} and
let $\xi\in L^0(\F_T)$ be a terminal condition such that $(E[\xi^-\mid\F_{\cdot}])^*\in L^1(\F_T)$.
If $\Acso\neq\emptyset$, 
then there exists a unique $\hat Z$ such that $(\Mss{},\hat Z)\in\Acso$.
\end{thm}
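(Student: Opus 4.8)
The plan is to derive the result from Theorem \ref{thm31} and then upgrade the control from $\mathcal{L}$ to $\mathcal{L}^1$. Since $\Acso\subseteq\Acs$, the assumption $\Acso\neq\emptyset$ forces $\Acs\neq\emptyset$, and the reinforced hypothesis yields $\xi^-=\EF{\xi^-}{T}\le(\EF{\xi^-}{\cdot})^*\in L^1(\F_T)$, so Theorem \ref{thm31} applies. It delivers the unique $\hat Z\in\mathcal L$ with $(\Mss{},\hat Z)\in\Acs$, the identity $\Mss{}=\Mssp{}$, and---via Lemma \ref{lemma1}---the canonical decomposition $\Mss{}=\Mss{0}+M-A$ as in \eqref{eq21}, where $M=\int\hat ZdW$ and $A$ is increasing, predictable, with $\E{A_T}<\infty$ (the latter exactly as in the proof of Theorem \ref{thm31}). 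Everything thus reduces to showing $\hat Z\in\mathcal L^1$, i.e. $M\in\mathcal H^1$; this promotes the supermartingale $\int\hat ZdW$ to a genuine $\mathcal H^1$-martingale, places $(\Mss{},\hat Z)$ in $\Acso$, and the uniqueness of $\hat Z$ is inherited from Lemma \ref{lemma1}.

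To establish $M\in\mathcal H^1$ I would pass through the maximal function. Reading off the decomposition and using that $A$ is nonnegative and increasing,
\[
M^*=\sup_{t\in[0,T]}\abs{\int_0^t\hat Z_udW_u}\le(\Mss{})^*+\abs{\Mss{0}}+A_T\,,
\]
so that $\E{M^*}<\infty$ as soon as $\E{(\Mss{})^*}<\infty$, because $\E{A_T}<\infty$ and $\Mss{0}$ is a finite constant. As $M$ is a continuous local martingale with $M_0=0$, the Burkholder--Davis--Gundy inequality then gives $\E{\langle M\rangle_T^{1/2}}\le c\,\E{M^*}<\infty$, that is $\hat Z\in\mathcal L^1$. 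The whole statement therefore hinges on the estimate $\E{(\Mss{})^*}<\infty$.

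That bound is the crux and is precisely where the strengthened terminal condition enters; I would obtain it from a two-sided control of $\Mss{}$. For the lower bound, Lemma \ref{lemma1} gives $\Mss{t}\ge\EF{\xi}{t}\ge-\EF{\xi^-}{t}$ for every $t$. For the upper bound, fix any $(Y^b,Z^b)\in\Acso$, available by hypothesis; since $Z^b\in\mathcal L^1$ the martingale $\int Z^bdW$ lies in $\mathcal H^1$ and has integrable maximal function, so the decomposition $Y^b=Y^b_0+\int Z^bdW-A^b$ yields $(Y^b)^*\in L^1(\F_T)$, while $\Mss{t}\le Y^b_t$ for all $t$ by \eqref{eq32} together with the fact that $\Mss{}=\Mssp{}$ and $Y^b$ are \cadlag. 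Combining, $-\EF{\xi^-}{t}\le\Mss{t}\le Y^b_t$ for all $t$, hence $\abs{\Mss{t}}\le\EF{\xi^-}{t}+\abs{Y^b_t}$ and
\[
(\Mss{})^*\le(\EF{\xi^-}{\cdot})^*+(Y^b)^*\,,
\]
whose right-hand side is integrable by the hypothesis $(\EF{\xi^-}{\cdot})^*\in L^1(\F_T)$ and by $(Y^b)^*\in L^1(\F_T)$.

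The main obstacle is exactly the control of the negative part of $\Mss{}$ through its running maximum: in Theorem \ref{thm31} the weaker condition $\xi^-\in L^1(\F_T)$ sufficed to make $\int\hat ZdW$ a supermartingale dominated from below by a martingale, but it gives no handle on $(\Mss{})^*$ and hence cannot force $M\in\mathcal H^1$. The Doob/$L\log L$-type strengthening $(\EF{\xi^-}{\cdot})^*\in L^1(\F_T)$ is tailored to bound that maximal function, and the presence of a comparison element of $\Acso$ supplies the matching bound on the positive side. Beyond these integrability points I expect no further difficulty, as the supermartingale property, the decomposition, the identity $\Mss{}=\Mssp{}$, and uniqueness are all inherited verbatim from Theorem \ref{thm31} and Lemma \ref{lemma1}.
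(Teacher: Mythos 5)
Your proposal is correct and follows the paper's route exactly: reduce to Theorem \ref{thm31} via $\Acso\subseteq\Acs$ and $\xi^-\le(\EF{\xi^-}{\cdot})^*\in L^1(\F_T)$, then verify $\hat Z\in\mathcal L^1$. The paper outsources that last verification to a citation (Theorem 4.5 of \citet{CSTP}), whereas you supply the argument yourself -- the two-sided bound $-\EF{\xi^-}{\cdot}\le\Mss{}\le Y^b$ giving $(\Mss{})^*\in L^1$, the canonical decomposition controlling $M^*$ by $(\Mss{})^*+|\Mss{0}|+A_T$, and Burkholder--Davis--Gundy -- and that filled-in argument is sound and is precisely the kind of estimate the citation stands in for.
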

\begin{proof}
$\Acso\neq\emptyset$ yields that $\Acs\neq\emptyset$, because $\Acso\subseteq\Acs$.
Also, from $\brak{\EF{\xi^-}{\cdot}}^*_T\in L^1(\F_T)$ we deduce that $\xi^-\in L^1(\F_T)$.%
~Hence, Theorem \ref{thm31} yields the existence of an unique control $\hat Z$, such 
that $(\Mss{},\hat Z)\in\Acs$.%
~Verifying that $\hat Z \in \mathcal L^1$
is done as in \citep[Theorem 4.5]{CSTP}.
\end{proof}
\subsection{Relaxations of the Conditions \textsc{(nor)} and \textsc{(pos)}}\label{sec23}
In this section we discuss possible relaxations of the conditions
\textsc{(nor)} and \textsc{(pos)} imposed on the generator throughout Sections \ref{sec21} and \ref{sec22}.

First, we want to replace \textsc{(nor)} by the weaker assumption \textsc{(nor')}.
We say that a generator $g$ satisfies
\begin{enumerate}[label=\textsc{(nor')},leftmargin=40pt]
\item if, for all $\tau\in\mathcal T$, there exists some stopping time $\delta>\tau$ such that
the stochastic differential equation
\be\label{eq52}
dy_s = g_s(y_s,0)ds\,,\q y_\tau=\Mssp{\tau}+\frac{\ve}{2}
\ee
admits a solution on $[\tau,\delta]$.
\end{enumerate}
By this we obtain the following corollary to Theorem \ref{thm31}.
\begin{cor}\label{cor2}
Let $g$ be a generator satisfying \textsc{(pos)} and \textsc{(nor')} and $\xi\in L^0(\F_T)$  
a terminal condition such that $\xi^-\in L^1(\F_T)$.
If $\Acs\neq\emptyset$, then
there exists a unique $\hat Z \in \mathcal L$ such that
$(\Mss{},\hat Z) \in \Acs$.
\end{cor}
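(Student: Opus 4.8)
The plan is to reproduce the proof of Theorem~\ref{thm31} almost line for line, altering only the one place where \textsc{(nor)} was genuinely needed. Tracing that proof, normalization enters decisively exactly once: the passage from \eqref{veri5} to \eqref{veri6} in Step~4c rests on $\int_{(\tau^*\vee s)\wedge t}^{(\sigma\vee s)\wedge t} g_u(\bar Y_u,\bar Z_u)\,du=0$, and this vanishing holds only because on $[\tau^*,\sigma[$ the candidate value process was the constant $\Mssp{\tau^*}+\frac{\ve}{2}$ while $\bar Z=0$, so that the integrand was $g_u(\,\cdot\,,0)=0$. Everything else — the preorder \eqref{eq36}, the chain and its limiting time $\tau^*$, the memberships $\bar Y\in\mathcal S$ and $\bar Z\in\mathcal L$ of Steps~4a and~4b, the telescoping over the $Y^m$-pieces, and the Zorn's-lemma argument of Step~5 together with the uniform-limit verification of Step~1 — never invokes \textsc{(nor)}, and I would carry it over verbatim.

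The substitution is of the flat middle piece. Applying \textsc{(nor')} at the stopping time $\tau^*$ furnishes a $\delta>\tau^*$ on which \eqref{eq52} has a solution $y$ with $y_{\tau^*}=\Mssp{\tau^*}+\frac{\ve}{2}$; this $y$ is the value process run with vanishing control, and it takes over the structural role of the old constant. Concretely, in \eqref{eq38a} I would replace $(\Mssp{\tau^*}+\frac{\ve}{2})1_{[\tau^*,\sigma_n[}$ by $y\,1_{[\tau^*,\sigma_n[}$, leaving \eqref{eq38b}, and hence $\bar Z=0$ on $]\tau^*,\sigma]$, untouched. Since $y$ is continuous of finite variation and starts at the old constant's value $\Mssp{\tau^*}+\frac{\ve}{2}$, the reshaped $\bar Y$ is again \cadlag, and replacing a flat middle segment by a continuous one alters neither the left limit $\bar Y_{\tau^*-}$ (governed by the $Y^m$-pieces through Proposition~\ref{lemma2}) nor the control $\bar Z$; hence Steps~4a and~4b go through unchanged.

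Next I would tighten the auxiliary time $\bar\tau$ of \eqref{eq37}. Besides its original $\frac{\ve}{2}$-control on $\Mssp{}$, I would intersect it with $\delta$ and with $\inf\{t>\tau^*:|y_t-(\Mssp{\tau^*}+\frac{\ve}{2})|>\frac{\ve}{8}\}$, redefining $\sigma_n$ through the new $\bar\tau$. The band constraint confines $y$ to within $\frac{\ve}{8}$ of $\Mssp{\tau^*}+\frac{\ve}{2}$ on $[\tau^*,\sigma_n[$. This preserves the two demands on that interval: on the one hand $\bar Y=y\le\Mssp{t}+\ve$ there, so $P[\tau(\bar Y)>\tau^*\mid\tau^*<T]=1$ still holds; on the other hand $\bar Y_{\sigma_n-}=y_{\sigma_n-}\ge\tilde Y^n_{\sigma_n}$ once the extra $\frac{\ve}{8}$ is absorbed into the estimate built from \eqref{eq37b} and \eqref{eq37c}, so $\bar Y$ still jumps down at $\sigma_n$. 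The downward jump at $\tau^*$ is untouched, since $\bar Y_{\tau^*}=\Mssp{\tau^*}+\frac{\ve}{2}$ and $\bar Y_{\tau^*-}\ge\Mssp{\tau^*}+\ve$ exactly as before.

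The sole new computation is the re-run of Step~4c. The generator integral over $[\tau^*,\sigma]$ no longer vanishes; instead, since $y$ solves \eqref{eq52}, it is accounted for exactly by the increment of $y$ across that interval, and feeding this into \eqref{veri5} telescopes the middle terms so that, using $\bar Y_{\tau^*-}\ge\bar Y_{\tau^*}$, the value $\bar Y_{(\sigma\vee s)\wedge t-}$ stands in place of the term $\bar Y_{(\tau^*\vee s)\wedge t}$ of \eqref{veri6}; the downward jump at $\sigma$ and the supersolution property of the $\tilde Y^n$-piece then close \eqref{veri8} as before, giving $(\bar Y,\bar Z)\in\Acs$. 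With this upper bound available, Zorn's lemma yields a maximal element whose stopping time equals $T$, producing for each $\ve>0$ a supersolution within $\ve$ of $\Mssp{}$ in the $\mathcal R^\infty$-norm, and Step~1 of Theorem~\ref{thm31} then extracts the unique $\hat Z$ with $(\Mss{},\hat Z)\in\Acs$. I expect the only real obstacle to be bookkeeping: choosing a single $\bar\tau$ for which the solvability of \eqref{eq52}, the $\Mssp{}+\ve$ bound, and enough slack to drop onto $\tilde Y^n$ at $\sigma_n$ hold simultaneously, and checking that the reshaped $\bar Y$ retains the purely downward jump structure on which Propositions~\ref{prop2} and~\ref{lemma2} rely.
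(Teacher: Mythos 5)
Your proposal matches the paper's proof essentially step for step: the paper likewise replaces the constant middle piece by the solution $y$ of \eqref{eq52} with zero control, introduces an extra stopping time capping the drift $\int_{\tau^*}^{\cdot} g_s(y_s,0)ds$ (equivalent to your band constraint on $y$, since \textsc{(pos)} makes $y$ increasing), and closes Step~4c by the same telescoping of the generator integral against the increment of $y$. The one repair needed is in your constants: keeping the original $\frac{\ve}{2}$-oscillation control on $\Mssp{}$ while allowing an extra $\frac{\ve}{8}$ of drift only yields $\bar Y_t\le\Mssp{t}+\frac{9\ve}{8}$ on $[\tau^*,\sigma_n[$, so both thresholds must be shrunk (the paper takes $\frac{\ve}{6}$ for each) to keep $\bar Y$ inside the $\ve$-tube.
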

\begin{proof}
We will proceed along the lines of the proof of Theorem \ref{thm31} with the focus on the required
alterations.

The first difficulty lies in the pasting at the stopping time $\tau^*$ 
within the definition of $(\bar Y,\bar Z)$ in \eqref{eq38a} and \eqref{eq38b}.
Instead of extending by a constant function,
we concatenate the value process at $\tau^*$ with the solution of the SDE \eqref{eq52}, started
at $y_{\tau^*}=\Mssp{\tau^*}+\frac{\ve}{2}$ and denoted by $y$. 
We emphasize that the zero control is maintained.

Furthermore, we have to introduce an additional stopping time and adjust $\bar\tau$ defined in
\eqref{eq37}, in order to ensure
that our constructed value process does not leave the $\ve$-neighborhood of $\Mssp{}$.
We define 
\ba
&\kappa := \inf\{t>\tau^*\,:\, 1_{\{\tau^*<T\}}\int_{\tau^*}^t g_s(y_s,0)ds > \frac{\ve}{6}\}\wedge \delta \label{nor1}\\
\text{and}\q
&\bar \tau := \inf\{t>\tau^*\,:\, 1_{\{\tau^*<T\}} |\Mssp{\tau^*} - \Mssp{t}| > \frac{\ve}{6}\}\wedge T\label{nor2}
\end{align}
and use $\bar\kappa:=\kappa\wedge\bar\tau$ within the definition of 
the sequence $(\sigma_n)$ in analogy to \eqref{eq37a}, that is, $\sigma_n = \hat\sigma_n\wedge\bar\kappa$,
for all $n\in\N$.
As before, we set $\sigma := \sum_{n\ge 1}1_{B_n}\sigma_n$.

The pasting in \eqref{eq38a} and \eqref{eq38b} is done analogously to the proof of Theorem \ref{thm31},
but now with the distinction 
that $\bar Y 1_{[\tau^*,\sigma[}= y_{\tau^*} + 1_{[\tau^*,\sigma[}\int_{\tau^*}^\cdot g_s(y_s,0)ds$.  
The definition of the stopping times $\kappa$, $\bar\tau$ and $\sigma$ implies that, on the set $B_n$, we have 
$\bar Y_t  \le\Mssp{t}+ \ve$, for all $t\in [\tau^*,\sigma_n[$.
Indeed, observe that, for $t\in [\tau^*,\sigma_n[$,
\bems
\bar Y_t = \Mssp{\tau^*}+\frac{\ve}{2} +\int_{\tau*}^{t}g_s(y_s,0)ds\le\Mssp{\tau^*}+\frac{2\ve}{3}\\
=\Mssp{\tau^*} -\Mssp{t} + \Mssp{t} +\frac{2\ve}{3}\le\Mssp{t} + \frac{\ve}{6}+\frac{2\ve}{3}<\Mssp{t} + \ve\,,
\end{multline*}
by means of \eqref{nor1} and \eqref{nor2}, together with the definition of $\sigma_n$.
Furthermore, on the set $B_n$,
\be\label{nor3}
\bar Y_{\sigma_{n-}}=\Mssp{\tau^*} + \frac{\ve}{2} + \int_{\tau*}^{\sigma_n}g_s(y_s,0)ds \ge \Mssp{\tau^*} + \frac{\ve}{2} \ge \bar Y_{\sigma_n}\,.
\ee
The first inequality in \eqref{nor3} follows from \textsc{(pos)}, whereas the second is proved analogously to the proof of Theorem \ref{thm31},
using \eqref{nor2} and the definition of $\sigma_n$.
Hence, pasting at the stopping time
$\sigma$ is in accordance with Proposition \ref{prop2}.
 
Finally, the downward jumps at $\tau^*$ and at $\sigma$, together with the zero control in between, ensure that $(\bar Y,\bar Z)$ 
satisfies \eqref{eq03}, as was shown in Step 4c of the proof of Theorem \ref{thm31}.
The rest of the proof does not need any further alterations.
\end{proof}

Also the positivity assumption \textsc{(pos)} on the generator can be relaxed to a linear bound below,
which however has to be consistent with the assumption \textsc{(nor')}.
In the following we say that a generator $g$ is 
\begin{enumerate}[label=\textsc{(lb-nor')},leftmargin=55pt]
 \item linearly bounded from below under \textsc{(nor')}, if there exist adapted measurable processes $a$ and $b$ with values
in $\R^{1\times d}$ and $\R$, respectively, such that
$g(y,z)\ge az^T -b$, for all $(y,z)\in\R\times\R^{1\times d}$, 
and
\be\label{eq50}
\frac{dP^a}{dP}= \SE{\int a dW}{T}
\ee
defines an equivalent probability measure $P^a$.
Furthermore, $\int_0^tb_sds\in L^1(P^a)$ holds for all $t\in[0,T]$, and $a$ and $b$ are such that the positive generator
defined by
\be\label{eq51}
\bar g(y,z) := g\brak{y + \int_0^\cdot b_s ds,z} - az^T -b\,,\q\text{for all $(y,z)\in\R\times\R^{1\times d}$}\,,
\ee 
satisfies \textsc{(nor')}.
\end{enumerate}
An \textsc{(lb-nor')} setting can always be reduced to a setting
with generator satisfying \textsc{(pos)} and \textsc{(nor')}, by using the change of measure \eqref{eq50} and
$\bar g$ defined in \eqref{eq51}.%
~Hence, Lemma \ref{lemma1} and Proposition \ref{prop2}, which strongly rely on the property \textsc{(pos)},
can be applied.%
~Note that for the case $b=0$, the generator $\bar g$ even satisfies \textsc{(pos)} and \textsc{(nor)}.%
~However, we need a slightly different definition of admissibility than before.%
~A control process $Z$ is said to be \emph{$a$-admissible}, if $\int ZdW^a$ is a $P^a$-supermartingale,
where $W^a=\brak{W^1-\int a^1ds,\cdots,W^d-\int a^dds}^T$ is a $P^a$-Brownian motion by Girsanov's
theorem.\\

The set
$\Aacs := \{(Y,Z) \in\mathcal S\times \mathcal L\,:\, Z \,\,\text{is $a$-admissible and \eqref{eq03} holds}\}$,
as well as the process
\[\Mssa{t} = \essinf \{Y_t \in L^0(\F_t) \,:\, (Y,Z) \in \Aacs\}\,,\q\text{for $t\in[0,T]$}\,,\]
are defined analogously to \eqref{eq02} and \eqref{eq32}, respectively.
We are now ready to state our most general result, which follows from
Corollary \ref{cor2} and \citep[Theorem 4.16]{CSTP}.
\begin{thm} 
Let $g$ be a generator satisfying \textsc{(lb-nor')} and $\xi\in L^0(\F_T)$ a terminal condition
such that $\xi^-\in L^1(P^a)$.
If in addition $\Aacs\neq\emptyset$, then there exists a unique $a$-admissible control $\hat Z$ such that $(\Mssa{},\hat Z)\in\Aacs$.
\end{thm}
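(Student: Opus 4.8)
The plan is to reduce the \textsc{(lb-nor')} setting to one in which the generator is positive and satisfies \textsc{(nor')}, and then to invoke Corollary \ref{cor2}; this is the route of \citep[Theorem 4.16]{CSTP}, with their existence result for positive generators replaced by our Corollary \ref{cor2}. Concretely, I would pass to the equivalent measure $P^a$ of \eqref{eq50}, under which $W^a$ is a $d$-dimensional Brownian motion by Girsanov's theorem, and simultaneously translate the value process, setting
\bes
\bar Y := Y - \int_0^\cdot b_s\,ds\,,\q \bar Z:=Z\,,\q \bar\xi := \xi - \int_0^T b_s\,ds\,.
\ees
Since $P^a\sim P$, the filtration is still generated by the Brownian motion $W^a$, satisfies the usual conditions, retains the predictable representation property, and the class $\mathcal L$ is unchanged; thus the entire framework of Section \ref{sec22} is available for the generator $\bar g$ of \eqref{eq51} with respect to $W^a$ under $P^a$.

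First I would make the correspondence between the two problems precise. Writing $dW = dW^a + a^T\,ds$ and substituting $\bar Y$, $\bar Z$ into \eqref{eq03}, the drift correction produced by the stochastic integral against $W^a$ cancels the $az^T$-term in \eqref{eq51}, while the deterministic $b$-shifts telescope; consequently $(Y,Z)$ satisfies \eqref{eq03} for $(g,\xi)$ if and only if $(\bar Y,\bar Z)$ satisfies \eqref{eq03} for $(\bar g,\bar\xi)$. Because $\bar Z = Z$ and $\int Z\,dW^a$ is a $P^a$-supermartingale precisely when $Z$ is $a$-admissible, the notion of $a$-admissibility for the original problem coincides with ordinary admissibility for the reduced one. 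Hence $(Y,Z)\in\Aacs$ if and only if $(\bar Y,\bar Z)\in\mathcal A(\bar\xi,\bar g)$, and, as the shift $\int_0^\cdot b_s\,ds$ is the same for every supersolution, the $\essinf$ defining $\Mssa{}$ transforms into that defining the reduced minimal value process, i.e. $\Mssa{} = \mathcal E^{\bar g}(\bar\xi) + \int_0^\cdot b_s\,ds$.

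It then remains to check that the reduced problem satisfies the hypotheses of Corollary \ref{cor2} and to translate the conclusion back. The generator $\bar g$ is positive and satisfies \textsc{(nor')} by \textsc{(lb-nor')}; the reduced admissible set is nonempty because $\Aacs\neq\emptyset$; and $\bar\xi^-\in L^1(P^a)$ follows from $\xi^-\in L^1(P^a)$ together with $\int_0^T b_s\,ds\in L^1(P^a)$, which is guaranteed by \textsc{(lb-nor')}. Corollary \ref{cor2}, applied in the $(P^a,W^a)$-framework, then yields a unique $\hat Z\in\mathcal L$ with $(\mathcal E^{\bar g}(\bar\xi),\hat Z)\in\mathcal A(\bar\xi,\bar g)$. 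Undoing the translation of the previous paragraph gives $(\Mssa{},\hat Z)\in\Aacs$ with the same control $\hat Z$, which is then $a$-admissible, and the uniqueness of $\hat Z$ transfers verbatim from the reduced problem.

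I expect the main obstacle to be the bookkeeping of the change of measure rather than any new idea: one must verify carefully that $a$-admissibility under $P$ is equivalent to admissibility under $P^a$, that the integrability and representation properties needed in the proof of Theorem \ref{thm31} survive the passage to $P^a$, and that the shifted process $\bar Y$ still lies in $\mathcal S$, so that the $\essinf$ defining $\Mssa{}$ genuinely corresponds to that of the reduced problem. Once $dW = dW^a + a^T\,ds$ is substituted, the cancellation of the $a$- and $b$-terms is routine, and the remainder is a direct appeal to Corollary \ref{cor2}.
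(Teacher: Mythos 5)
Your proposal is correct and follows exactly the route the paper intends: the paper states that the result ``follows from Corollary \ref{cor2} and \citep[Theorem 4.16]{CSTP}'', and the reduction you carry out --- the change of measure \eqref{eq50}, the shift $\bar Y = Y - \int_0^\cdot b_s\,ds$ with $\bar g$ from \eqref{eq51}, the identification of $a$-admissibility with admissibility under $P^a$, and the appeal to Corollary \ref{cor2} in the $(P^a,W^a)$-framework --- is precisely the reduction described in the paragraph preceding the theorem. Your write-up in fact makes explicit the cancellation of the $a$- and $b$-terms and the integrability of $\bar\xi^-$, which the paper leaves to the cited reference.
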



\subsection{Continuous Local Martingales and Controls in $\mathcal L^1$}\label{sec04}
Under stronger integrability conditions, 
the techniques used in the proof of Theorem \ref{thm31} can be generalized to the case where
the Brownian motion $W$ appearing in 
the stochastic integral in \eqref{eq03} is replaced by a $d$-dimensional continuous local martingale $M$.
Let us assume that $M$ is adapted to a filtration $(\F_t)_{t\ge0}$, which satisfies the usual conditions and
in which all martingales are continuous and all stopping times are predictable.%
~We consider controls within the set $\mathcal L^1:=\mathcal L^1(M)$, consisting of all $\R^{1\times d}$-valued, progressively
measurable processes $Z$, such that $\int ZdM \in \mathcal H^1$.
As before, for $Z \in \mathcal{L}^1$ the stochastic integral $(\int_0^t Z_s dM_s)_{t
\in[0,T]}$ is well defined and is by means
of the Burkholder-Davis-Gundy inequality a continuous martingale.%
~A pair $(Y,Z)\in\mathcal S\times \mathcal L^1$ is now called a supersolution of a 
BSDE, if it satisfies, for $0\le s\le t\le T$,
\be\label{eq41}
Y_s - \int_s^t g_u(Y_u,Z_u)d\ang{M}_u + \int_s^t Z_u dM_u \ge Y_t \q\text{and}\q Y_T \ge \xi\,\,,
\ee
for a normal integrand $g$ as generator and a terminal condition $\xi\in L^0(\F_T)$.
We will focus on the set
\[\Acsmo:=\crl{(Y,Z) \in\mathcal S\times \mathcal L^1\,:\, \,\text{$(Y,Z)$ satisfy \eqref{eq41}}}\,.
\] 
If we assume 
$\Acsmo$ 
to be non-empty, Theorem \ref{thm31} combined with compactness results for sequences of $\mathcal H^1$-bounded
martingales given in \citet{DelbSchach01} yields 
that
\be\label{eq43}
\Mss{t} := \essinf \crl{Y_t \in L^0(\F_t) \,:\, (Y,Z) \in \Acsmo}\,,\q t\in[0,T]\,,
\ee
is the value process of
the unique minimal supersolution within $\Acsmo$. 
Note that Lemma \ref{lemma1} and Proposition \ref{prop2} 
extend to the case where $W$ is substituted by $M$.
\begin{thm}\label{thm41}
Assume that the generator $g$ satisfies \textsc{(pos)} and \textsc{(nor)} and
let $\xi\in L^0(\F_T)$ be a terminal condition such that $(E[\xi^-\mid\F_{\cdot}])^*\in L^1(\F_T)$.
If $\Acsmo\neq\emptyset$, 
then there exists a unique $\hat Z$ such that $(\Mss{},\hat Z)\in\Acsmo$. 
\end{thm}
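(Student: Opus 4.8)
The plan is to mirror the proof of Theorem \ref{thm31}, keeping its entire Zorn-type construction of a uniformly approximating sequence intact and replacing only the Brownian verification step by an argument tailored to the general integrator $M$. Since the text already records that Lemma \ref{lemma1}, Proposition \ref{prop2} and Proposition \ref{lemma2} extend to the case where $W$ is substituted by $M$, I would first note that, under the strengthened assumption $(\EF{\xi^-}{\cdot})^*\in L^1(\F_T)$, exactly as in Theorem \ref{thm32} every control $Z$ of a pair $(Y,Z)\in\Acsmo$ lies in $\mathcal L^1$, so that $\int ZdM\in\mathcal H^1$ is a genuine martingale, and each value process $Y$ is a supermartingale with $Y\ge-\EF{\xi^-}{\cdot}$.

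Next I would carry over Steps 2--5 of the proof of Theorem \ref{thm31} with the obvious replacements of $dW$ by $dM$ and $du$ by $d\ang{M}_u$ throughout. These steps construct, for each $\ve>0$, an upper bound for a maximal chain with respect to the preorder \eqref{eq36}, and they rely solely on the pasting properties of Proposition \ref{prop2}, the structural results of Proposition \ref{lemma2} and Lemma \ref{lemma1}, and the normalization $(nor)$ entering through the identity $\int g_u(\cdot,0)d\ang{M}_u=0$ on the zero-control regions; none of this uses the Brownian nature of $W$. The only extra bookkeeping is that the pasted controls must remain in $\mathcal L^1$, which is guaranteed by the strengthened integrability of $\xi$ as in the $\mathcal L^1$-verification of Theorem \ref{thm32}. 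The outcome is, as in \eqref{eq22}, a sequence $((Y^n,Z^n))\subset\Acsmo$ with $\lim_n\norm{Y^n-\Mssp{}}_{\mathcal R^\infty}=0$.

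The verification is where the integrator matters. Doob--Meyer-decomposing the \cadlag\ supermartingales $Y^n=Y^n_0+M^n-A^n$, the martingale parts $M^n=\int Z^ndM$ are continuous (all local martingales being continuous) and $\mathcal H^1$-bounded, while the increasing predictable parts satisfy $\sup_nE[A^n_T]<\infty$ by $\xi\in L^1$ and the martingale property of $\int Z^ndM$. From $\lim_nE[(Y^n-\Mssp{})^*]=0$ I would invoke the compactness principle for $\mathcal H^1$-bounded martingale sequences of \citet{DelbSchach01}, in place of the Brownian-specific \citet{ProtterBarlow} argument, to obtain that $\Mssp{}$ is a special semimartingale with canonical decomposition $\Mssp{}=\Mssp{0}+M-A$, that $M^n\to M$ in $\mathcal H^1$, and that $A^n\to A$. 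Because each $M^n$ lies in the stable subspace of $\mathcal H^1$ of stochastic integrals with respect to $M$, which is closed, the limit admits a representation $M=M_0+\int\hat ZdM$ with $\hat Z\in\mathcal L^1$; the Kunita--Watanabe isometry together with the Burkholder--Davis--Gundy inequality then yields, along a subsequence, $Z^n\to\hat Z$, $P\otimes d\ang{M}$-almost surely. With this convergence in hand, Fatou's lemma and the lower semicontinuity of $g$ give, exactly as in Step 1 of Theorem \ref{thm31},
\[
\Mssp{s}-\int_s^tg_u(\Mssp{u},\hat Z_u)d\ang{M}_u+\int_s^t\hat Z_udM_u\ge\limsup_nY^n_t=\Mssp{t}\,,
\]
together with $\Mssp{T}=\xi$, so that $(\Mssp{},\hat Z)\in\Acsmo$. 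Since $\Mss{}\ge\Mssp{}$ by the extension of Proposition \ref{prop2}, minimality forces $\Mss{}=\Mssp{}$, and uniqueness of $\hat Z$ follows from the extension of Lemma \ref{lemma1}.

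I expect the main obstacle to be precisely the identification of the limiting martingale as a stochastic integral against $M$ with an $\mathcal L^1$-integrand. In the Brownian setting of Theorem \ref{thm31} this was automatic from the martingale representation theorem, whereas for a general continuous local martingale $M$ one must first upgrade the convergence of the martingale parts to the $\mathcal H^1$-norm (the role of the \citet{DelbSchach01} compactness) and only then exploit the closedness of the stable subspace generated by $M$. A secondary point to treat carefully is that the resulting control convergence is strong enough, namely $P\otimes d\ang{M}$-almost sure along a subsequence, to justify the lower-semicontinuity and Fatou passage in the final verification.
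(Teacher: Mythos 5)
Your proposal is correct and follows essentially the same route as the paper's proof: carry over the Zorn-type construction of Theorem \ref{thm31} verbatim to produce a sequence converging uniformly to $\Mssp{}$, upgrade to $\mathcal H^1$-convergence of the martingale parts of the canonical decompositions, identify the limit martingale as $\int \hat Z\,dM$ with $\hat Z\in\mathcal L^1$ via the closedness of the stable subspace of stochastic integrals (Delbaen--Schachermayer), and verify $(\Mssp{},\hat Z)\in\Acsmo$ by Fatou and lower semicontinuity. One small correction of attribution: the Barlow--Protter semimartingale convergence theorem is not Brownian-specific and is still what the paper invokes (given the uniform $\mathcal H^1$-bound on $\int Z^n dM$) to get the special semimartingale decomposition of $\Mssp{}$ and the $\mathcal H^1$-convergence of the martingale parts; the Delbaen--Schachermayer result enters only afterwards, for the integral representation of the limit, exactly as in your ``closedness of the stable subspace'' step.
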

\begin{proof}
By assumption, there is some $(Y^b,Z^b)\in\Acsmo$ and we consider,
without loss of generality, only those pairs $(Y,Z)\in\Acsmo$ satisfying $Y\le Y^b$, 
obtained by suitable pasting as in Proposition \ref{prop2}.%
~Using the techniques of the proof of Theorem \ref{thm31}, we can find a sequence $((Y^n,Z^n))\subset\Acsmo$ 
satisfying $\lim_n\Vert Y^n-\Mssp{}\Vert_{\mathcal R^\infty}=0$, in analogy to \eqref{eq23}.
Since $(\int Z^ndM)$ is uniformly bounded in $\mathcal H^1$, compare \citep[Theorem 4.5]{CSTP},
it follows from \citep[Theorem 1]{ProtterBarlow} that 
$\Mssp{}$ is a special
semimartingale 
with canonical 
decomposition $\Mssp{} = \Mssp{0} + N - A$ and that
\be\label{eq44}
\lim_{n\to\infty}\norm{\int Z^ndM-N}_{\mathcal H^1} = 0\,.
\ee
Moreover, $N\in\mathcal H^1$.
Now \citep[Theorem 1.6]{DelbSchach01} yields the existence of some $\hat Z\in\mathcal L^1$
such that $N=\int\hat ZdM$.
By means of \eqref{eq44}, $(Z^n)$ converges, up to a subsequence, $P\otimes d\ang{M}_t$-almost surely to $\hat Z$
and $\lim_{n}\int_0^tZ^ndM=\int_0^t\hat ZdM$, for
all $t\in[0,T]$, $P$-almost surely, by means of the Burkholder-Davis-Gundy inequality.
In particular, $\lim_{n\to\infty} Z^n(\omega)=\hat Z(\omega)$, $d\ang{M}_t$-almost surely, for almost all $\omega\in\Omega$.
Verifying that $(\Mssp{},\hat Z)$ satisfy \eqref{eq41} is now done analogously to Step 1 in the proof of 
Theorem \ref{thm31}, and hence we are done.
\end{proof}
%
\bibliographystyle{abbrvnat}
\bibliography{bibliography}
\end{document}